\documentclass[oneside,english]{amsart}
\usepackage[T1]{fontenc}
\usepackage[latin9]{inputenc}
\usepackage{textcomp}
\usepackage{amsbsy}
\usepackage{amstext}
\usepackage{amsthm}
\usepackage{amssymb}

\makeatletter
\numberwithin{equation}{section}
\numberwithin{figure}{section}
\theoremstyle{plain}
\newtheorem{thm}{\protect\theoremname}
\theoremstyle{plain}
\newtheorem{cor}[thm]{\protect\corollaryname}
\theoremstyle{definition}
\newtheorem{defn}[thm]{\protect\definitionname}
\theoremstyle{remark}
\newtheorem{rem}[thm]{\protect\remarkname}
\theoremstyle{plain}
\newtheorem{lem}[thm]{\protect\lemmaname}
\theoremstyle{plain}
\newtheorem{prop}[thm]{\protect\propositionname}

\usepackage[a4paper, margin=3cm]{geometry}

\theoremstyle{definition}
\newtheorem*{assumption}{Assumption}

\theoremstyle{theorem}
\newtheorem*{chen}{Generalized Chen's Conjecture}

\DeclareMathOperator{\tr}{tr}

\DeclareMathOperator{\Diff}{Diff}
\DeclareMathOperator{\phg}{phg}

\usepackage{graphicx}
\usepackage{amsmath}
\usepackage{tikz}
\usepackage{tikz-3dplot}
\usetikzlibrary{math,calc,arrows}

\makeatother

\usepackage{babel}
\providecommand{\corollaryname}{Corollary}
\providecommand{\definitionname}{Definition}
\providecommand{\lemmaname}{Lemma}
\providecommand{\propositionname}{Proposition}
\providecommand{\remarkname}{Remark}
\providecommand{\theoremname}{Theorem}

\begin{document}
\title{Biharmonic maps on conformally compact manifolds}
\author{Marco Usula}
\begin{abstract}
We study biharmonic maps between conformally compact manifolds, a
large class of complete manifolds with bounded geometry, asymptotically
negative curvature, and smooth compactification. These metrics provide
a far-reaching generalization of hyperbolic space. We work on the
class of simple $b$-maps, i.e. maps which send interior to interior,
boundary to boundary, and are transversal to the boundary of the target
manifold. The main result of this paper is a non-existence result:
if a simple $b$-map $u:\left(M,g\right)\to\left(N,h\right)$ between
conformally compact manifolds is biharmonic, its restriction to the
boundary is non-constant, and moreover $\left(N,h\right)$ is non-positively
curved, then $u$ is harmonic. We do not assume any integrability
condition on $u$: in particular, $u$ is not required to have finite
energy, nor is its tension field required to be in $L^{p}$ for any
$p$. Our result implies the following version of the Generalized
Chen's Conjecture: if $\left(N,h\right)$ is a non-positively curved
conformally compact manifold, and $\Sigma\hookrightarrow N$ is a
properly embedded submanifold with boundary meeting $\partial N$
transversely, then $\Sigma$ is biharmonic if and only if it is minimal.
\end{abstract}

\maketitle
\sloppy

\section{Introduction}

Let $\left(M,g\right)$ and $\left(N,h\right)$ be two Riemannian
manifolds. A map $u\in C^{\infty}\left(M,N\right)$ is said to be
\emph{harmonic} if it is a critical point of the energy functional
\[
\mathcal{E}_{1}\left(u\right)=\frac{1}{2}\int_{M}\left|du\right|^{2}\text{dVol}_{g}.
\]
If $M$ is non-compact, this energy can be infinite, but the definition
is still valid if one restricts to compactly supported variations
of $u$. Being a critical point of $\mathcal{E}_{1}$ is equivalent
to the Euler--Lagrange equations
\[
\tr_{g}\nabla du=0,
\]
where the differential $du$ is seen as a section of $T^{*}M\otimes u^{*}TN$,
and $\nabla$ is the connection on this bundle induced by the Levi-Civita
connection on $T^{*}M$ and the pull-back via $u$ of the Levi-Civita
connection on $TN$. The resulting vector field along $u$, $\tau\left(u\right):=\tr_{g}\nabla du$,
is called the \emph{tension field }of $u$. The equation $\tau\left(u\right)=0$
is a second order semilinear elliptic partial differential equation.

The study of harmonic maps was pioneered by Eells and Sampson in \cite{EellsSampson},
and it has been a central topic in geometric analysis ever since.
Harmonic maps are closely related to minimal immersions: if $u:M\to N$
is a Riemannian immersion (i.e. the pull-back via $u$ of the metric
on $N$ equals the metric on $M$), then $u$ is harmonic if and only
if it is minimal. When the curvature of the target manifold $N$ is
non-positive, there are very strong existence and uniqueness results
in a given homotopy class: it is proved in \cite{EellsSampson} that,
if $M$ is compact and the curvature of $N$ is non-positive, then
the gradient flow for the energy functional (the so-called harmonic
map heat flow) exists for all times and converges to a harmonic map.
Furthermore, as proved by Hartman in \cite{HartmanHomotopicHarmonic},
the limiting function is unique. This establishes a nonlinear analogue
of the Hodge Theorem when the curvature of $N$ is non-positive. The
literature on harmonic maps is vast, and we refer for example to \cite{JostBook}
and the references therein for a more complete picture.

\emph{Biharmonic maps} provide a natural generalization of harmonic
maps. They are critical points of the \emph{bienergy }functional
\[
\mathcal{E}_{2}\left(u\right)=\frac{1}{2}\int_{M}\left|\tau\left(u\right)\right|^{2}\text{dVol}_{g}.
\]
Analogously to harmonic maps, the Euler--Lagrange equation for this
functional is a fourth order semilinear elliptic PDE for $u$,
\[
J_{u}\tau\left(u\right)=0.
\]
Here $J_{u}\in\Diff^{2}\left(M;u^{*}TN\right)$ is a second order
Laplace-type elliptic operator on sections of $u^{*}TN$, defined
as
\[
J_{u}:=\Delta+\tr_{g}R_{h}\left(du,\cdot\right)du.
\]
The operator $\Delta$ is the (positive) rough Laplacian associated
to the pull-back of the Levi-Civita connection on $u^{*}TN$, while
$R_{h}$ is the Riemann tensor of $h$. We will refer to $J_{u}$
as the \emph{Jacobi operator }associated to $u$.

Biharmonic maps were introduced by Eells and Lemaire in \cite{EellsLemaireSelectedTopics},
and there has been a steadily growing interest in this generalization
of harmonic maps since then. Since harmonic maps are absolute minimizers
for the bienergy functional, it is natural to look for biharmonic
maps which are not harmonic; maps of this type are called \emph{proper
biharmonic}. There is a rich literature on this subject, and we refer
to \cite{MontaldoOniciucSurvey, YeLinBangYenChen} for a general survey.
There are many examples of proper biharmonic maps with target manifolds
of non-negative curvature: we mention for example \cite{BalmusMontaldoOniciucSpheres, CaddeoMontaldoOniciucSpheres, SiffertNewBiharmonic, SiffertExplicitProperPharmonic},
and some intriguing examples that seem special of dimension 4 \cite{MontaldoOniciucRattoRotationally}.
Many of these works provide examples of biharmonic submanifolds, i.e.
Riemannian submanifolds whose inclusion is biharmonic.

By contrast, the case of non-positively curved targets is much more
restrictive. When $M$ is \emph{closed }(i.e. compact without boundary)
and $N$ is non-positively curved, a direct application of the Weitzenböck
formula implies that all biharmonic maps must be harmonic \cite{JiangI, JiangII}.
This result has been generalized to the case of complete domains,
under various finiteness conditions. In particular, in \cite{NUG},
it is proved that biharmonic maps $u:\left(M,g\right)\to\left(N,h\right)$
with $\left(M,g\right)$ complete, $\left(N,h\right)$ non-positively
curved, and \emph{with} \emph{both finite energy and bienergy}, must
necessarily be harmonic. In \cite{Maeta}, the assumptions of finiteness
of the bienergy is relaxed, and the same result is proved under the
hypothesis $\tau\left(u\right)\in L^{p}$ for some $p\in\left(1,+\infty\right)$.

In \cite{ChenI}, B. Y. Chen conjectured that every biharmonic submanifold
of $\mathbb{R}^{n}$ must be minimal. Meanwhile, it has been proved
in \cite{CaddeoMontaldoOniciucSpheres} that every biharmonic surface
in $\text{H}^{3}$ must be minimal, and that every pseudo-umbilical
biharmonic submanifold $M^{m}\subset\text{H}^{n}$ with $m\not=4$
must be minimal. This led to the formulation of the following generalized
version of Chen's conjecture \cite{CaddeoMontaldoPiuOnBiharmonicMaps}:

\begin{chen}Any biharmonic submanifold of a non-positively curved
manifold $\left(N,h\right)$ must be minimal.\end{chen}

This conjecture is now known to be false in full generality, with
counterexamples provided in \cite{YeLinLiangChenConjecture} on certain
negatively curved manifolds.

In this paper, we focus on biharmonic \emph{simple $b$-maps between
conformally compact manifolds}. A simple $b$-map is just a smooth
map $u:M\to N$ which satisfies two properties:
\begin{enumerate}
\item $u$ sends the interior $M^{\circ}$ to the interior $N^{\circ}$,
and the boundary $\partial M$ to the boundary $\partial N$;
\item $u$ is transversal to $\partial N$.
\end{enumerate}
The class of \emph{conformally compact metrics} on a compact manifold
with boundary $M$ is a large class of complete metrics on the interior
$M^{\circ}$, which develop a ``double pole'' at $\partial M$.
More precisely, given a boundary defining function $x$ for $M$ (cf.
Definition \ref{def:bdf}), a conformally compact metric $g$ on $M$
is a metric on the interior $M^{\circ}$ such that the conformally
related metric $x^{2}g$ extends to a metric on the whole of $M$.
The prototypical example is the hyperbolic metric on the unit ball
$B^{n}$ in $\mathbb{R}^{n}$,
\[
g_{\text{H}^{n}}=\frac{4d\boldsymbol{y}^{2}}{\left(1-\left|\boldsymbol{y}\right|^{2}\right)^{2}}
\]
where $\boldsymbol{y}=\left(y^{1},...,y^{n}\right)\in\mathbb{R}^{n}$.
In this case, the boundary defining function is $1-\left|\boldsymbol{y}\right|^{2}$.
Just as for hyperbolic space, conformally compact metrics are complete
with bounded geometry; any point of the boundary is at infinite distance
from any point of the interior; the boundary inherits from the metric
a conformal class called the \emph{conformal infinity} of the metric;
finally, the sectional curvatures are all asymptotically negative.
We will recall some properties of conformally compact metrics in §\ref{subsec:Conformally-compact-manifolds}.

Conformally compact geometry has been a central topic of study in
geometric analysis since the 80s, starting with the influential work
of Fefferman and Graham: in \cite{FeffermanGraham}, they studied
expansions of conformally compact \emph{Einstein }metrics, in order
to obtain invariants of their conformal infinity. By now, conformally
compact geometry is widely regarded as the correct framework to understand
the Riemannian version of the AdS-CFT correspondence in physics \cite{BiquardAdS}.
From a more analytic side, the study of conformally compact manifolds
and their natural geometric elliptic operators was pioneered by Mazzeo
\cite{MazzeoPhD, MazzeoHodge}, and Mazzeo--Melrose \cite{MazzeoMelroseResolvent}.

The main result of this paper is the following non-existence result
for proper biharmonic simple $b$-maps:
\begin{thm}
Let $\left(M^{m+1},g\right)$ and $\left(N^{n+1},h\right)$ be conformally
compact manifolds, with $\left(N,h\right)$ of non-positive sectional
curvature. Let $u:M\to N$ be a simple $b$-map. If $m\geq1$, assume
that $u_{|\partial M}$ is not constant on any connected component
of $\partial M$. Then $u$ is biharmonic if and only if it is harmonic.
\end{thm}

The condition on the restriction of $u_{|\partial M}$ is necessary:
we show this by exhibiting an example of a proper biharmonic map from
$\text{H}^{m+1}$ with $m>0$, to $\text{H}^{n+1}$, which does not
satisfy this assumption (cf. Proposition \ref{prop:example-proper-biharmonic}).
We remark that, unlike \cite{NUG, Maeta}, \emph{we do not assume
any finiteness property on the energy or the bienergy of $u$}. As
we shall see, for any simple $b$-map $u:\left(M,g\right)\to\left(N,h\right)$,
the pointwise energy $\left|du\right|^{2}$ and the pointwise bienergy
$\left|\tau\left(u\right)\right|^{2}$ are both naturally bounded
functions on $M$. However, we do not assume these functions to be
integrable; in fact, for a \emph{generic} simple $b$-map, the functions
$\left|du\right|^{2}$ and $\left|\tau\left(u\right)\right|^{2}$
do not vanish at infinity, and therefore they are generically not
in $L^{p}$ for any $p\in\left(1,+\infty\right)$, since the volume
of $\left(M,g\right)$ is infinite. As an immediate corollary, since
harmonic immersions are minimal, we obtain the following version of
the Generalized Chen's Conjecture:
\begin{cor}
Let $\left(N,h\right)$ be a conformally compact manifold of non-positive
sectional curvature, and let $M$ be a compact manifold with boundary.
Let $u:M\to N$ be an immersion mapping $\partial M$ to $\partial N$,
$M^{\circ}$ to $N^{\circ}$, and transversal to $\partial N$. Then
$u$ is biharmonic if and only if it is minimal.
\end{cor}

Let us comment on the techniques used in this work. As we shall see,
the class of simple $b$-maps is naturally adapted to conformally
compact geometry, and makes the problem discussed in this paper amenable
to the analytic techniques developed by Mazzeo and Melrose. In general,
both the harmonic and biharmonic map equations are semilinear elliptic
PDEs: this means that the linearizations (at a given map $v$) of
the tension field map $u\mapsto\tau\left(u\right)$ and the bitension
field map $u\mapsto\tau_{2}\left(u\right):=J_{u}\tau\left(u\right)$,
are both elliptic differential operators on sections of $v^{*}TN$.
In the conformally compact context, \emph{a priori} these differential
operators make sense only in the interior, because the conformally
compact metrics (and their Levi-Civita connections) do not extend
to the boundary in a straigthforward way. However, we shall see in
§\ref{subsec:Harmonic-and-biharmonic simple b-maps} that these operators
do extend canonically over the whole of $M$ to \emph{$0$-elliptic
$0$-differential operators}. The class of $0$-differential operators
was introduced by Mazzeo and Melrose, and contains all the geometric
operators associated to a conformally compact metric. There is a very
refined elliptic theory for these operators \cite{MazzeoMelroseResolvent, MazzeoPhD, MazzeoHodge, Hintz0calculus, LeeFredholm, MazzeoEdgeI},
to which the author has contributed in \cite{UsulaPhD, Usula0BVP}.
Although we will use this elliptic theory for the proof of the main
theorem, this paper is self-contained: we will recall the main definitions
and the parts of the theory that we need in §\ref{sec:0-world}.

\subsection*{Acknowledgements}

The author wishes to thank Stefano Montaldo, for his encouragement
and for various fruitful discussions on biharmonic maps. This work
was supported by the EoS grant 40007524.

\section{\label{sec:0-world}The $0$-world}

\subsection{\label{subsec:A-remark-on-phg}A preliminary remark on polyhomogeneity}

A compact manifold with boundary $M$ comes equipped with various
natural rings of functions which generalize the ring $C^{\infty}\left(M\right)$.
One of them is the ring of \emph{bounded polyhomogeneous functions}.
Roughly speaking, polyhomogeneous functions on $M$ are smooth functions
on the interior $M^{\circ}$ which admit a Taylor-like expansion near
the boundary; in contrast with the smooth case, however, the terms
in the expansion are allowed to be of the form $x^{\alpha}\left(\log x\right)^{l}$,
where $x$ is a boundary defining function for $M$, and $\left(\alpha,l\right)\in\mathbb{C}\times\mathbb{N}$.
The precise description of this notion is summarized below, and we
refer to Chapter 4 of \cite{MelroseCorners} for further details.
\begin{defn}
\label{def:bdf}A \emph{boundary defining function }for $M$ is a
smooth function $x:M\to[0,+\infty)$ such that $x^{-1}\left(0\right)=\partial M$
and $dx$ is nowhere vanishing along $\partial M$.
\end{defn}

\begin{defn}
An \emph{index set} is a subset of $\mathbb{C}\times\mathbb{N}$ such
that if $\left(\alpha,l\right)\in\mathcal{E}$, then $\left(\alpha+k,l'\right)\in\mathcal{E}$
for every $k\in\mathbb{N}$ and $l'\leq l$, and for every $\delta\in\mathbb{R}$
the set $\mathcal{E}_{\delta}=\left\{ \left(\alpha,l\right)\in\mathcal{E}:\Re\left(\alpha\right)\leq\delta\right\} $
is finite.
\end{defn}

\begin{defn}
A\emph{ polyhomogeneous function on $M$ with index set $\mathcal{E}$
}is a smooth function $u$ on $M^{\circ}$ which admits an asymptotic
expansion at the boundary, of the form
\[
u\sim\sum_{\left(\alpha,l\right)\in\mathcal{E}}u_{\alpha,l}x^{\alpha}\left(\log x\right)^{l},
\]
where $x$ is a boundary defining function for $M$, and $u_{\alpha,l}\in C^{\infty}\left(M\right)$.
We denote by $\mathcal{A}_{\phg}^{\mathcal{E}}\left(M\right)$ the
space of polyhomogeneous functions on $M$ with index set $\mathcal{E}$.
\end{defn}

\begin{rem}
The meaning of the asymptotic expansion above is that, for every $\delta\in\mathbb{R}$,
the truncated difference
\[
S_{\delta}:=u-\sum_{\begin{smallmatrix}\left(\alpha,l\right)\in\mathcal{E}\\
\Re\left(\alpha\right)\leq\delta
\end{smallmatrix}}u_{\alpha,l}x^{\alpha}\left(\log x\right)^{l}
\]
is in $x^{\delta}L^{\infty}\left(M\right)$, as well as all its multiple
derivatives by vector fields tangential to the boundary. In the language
developed in \cite{MelroseCorners}, we say that the truncated difference
$S_{\delta}$ is $O\left(x^{\delta}\right)$ \emph{conormal}. Note
that, by Taylor's Theorem, $\mathcal{A}_{\phg}^{\mathbb{N}}\left(M\right)=C^{\infty}\left(M\right)$.
\end{rem}

\begin{rem}
Although functions in $\mathcal{A}_{\phg}^{\mathcal{E}}\left(M\right)$
are allowed to blow up at the boundary, and are therefore defined
a priori only on $M^{\circ}$, the spaces $\mathcal{A}_{\phg}^{\mathcal{E}}\left(M\right)$
are intrinsically associated to the compact manifold: indeed, elements
of $\mathcal{A}_{\phg}^{\mathcal{E}}\left(M\right)$ can be uniquely
interpreted as extendible distributions on $M$ (cf. \cite{MelroseCorners}).
\end{rem}

\begin{assumption}In this paper, when no index set is specified,
every polyhomogeneous function is assumed to be real-valued, bounded,
and with index set $\mathcal{E}\subseteq\mathbb{R}\times\mathbb{N}$.
In particular, \emph{no negative powers of $x$, complex powers of
$x$, or leading log terms, are allowed}. Similarly, all polyhomogeneous
objects (sections of vector bundles, connections, maps, etc.) are
assumed to be locally described in terms of real-valued bounded polyhomogeneous
functions in the sense specified here.\end{assumption}

While polyhomogeneous functions might seem a bit exotic at first sight,
working with these objects in our context is both natural and necessary.
As already mentioned, in our formulation of the problem, the harmonic
and biharmonic map equations are \emph{semilinear $0$-elliptic equations
}(cf. §\ref{subsec:0-differential-operators}); solutions of equations
of this type, with smooth ``boundary values'', are always conormal
and often polyhomogeneous, but they are very rarely smooth. We refer
to \cite{BoundaryRegularityPE, MazzeoYamabeRegularity} for two important
examples of non-linear problems of this type, which can help elucidate
this phenomenon.

\subsection{\label{subsec:Conformally-compact-manifolds}Conformally compact
manifolds}

Let $M^{m+1}$ be a compact manifold with boundary. Denote by $M^{\circ}$
the interior of $M$, and by $\partial M$ its boundary.
\begin{defn}
A \emph{conformally compact metric} (or \emph{$0$-metric}) on $M$
is a smooth metric $g$ on $M^{\circ}$ such that, for some (hence
every) boundary defining function $x$ on $M$, the conformal rescaling
$x^{2}g$ extends to a polyhomogeneous metric on the whole of $M$.
\end{defn}

Conformally compact metrics have been studied intensely, particularly
in relation to the Einstein equation and the AdS/CFT correspondence
\cite{BiquardAdS}. Although, as metrics, they are defined only in
the interior, they can be seen as bundle metrics on a natural vector
bundle over the whole of $M$. Denote by $\mathcal{V}_{0}\left(M\right)$
the Lie algebra of smooth vector fields on $M$ vanishing along $\partial M$.
Its elements are called \emph{$0$-vector fields}. $0$-vector fields
are just ordinary vector fields locally near a point of $M^{\circ}$,
but near a point $p\in\partial M$ they can be written in half-space
coordinates $\left(x,\boldsymbol{y}\right)$ centered at $p$ as linear
combinations of
\[
x\partial_{x},x\partial_{y^{1}},...,x\partial_{y^{m}}.
\]
The space $\mathcal{V}_{0}\left(M\right)$ is a locally finitely generated
projective $C^{\infty}\left(M\right)$ module, and therefore by the
Serre--Swan Theorem it can be realized as the module of sections
of a unique smooth vector bundle $^{0}TM$, the \emph{$0$-tangent
bundle}. The inclusion $\mathcal{V}_{0}\left(M\right)\to\mathcal{V}\left(M\right)$
is a morphism of $C^{\infty}\left(M\right)$ modules, and therefore
it induces a bundle map $\#:{}^{0}TM\to TM$ called the \emph{anchor
map}. Since $0$-vector fields span $TM$ pointwise at interior points,
the anchor map identifies $^{0}TM_{|M^{\circ}}$ with $TM^{\circ}$
canonically. However, the rank of $\#$ drops to $0$ along $\partial M$.
Along with $^{0}TM$, we can define all the usual tensor bundles,
and in particular the exterior powers $^{0}\Lambda^{k}:=\Lambda^{k}\left(^{0}T^{*}M\right)$
and the symmetric powers $^{0}S^{k}:=S^{k}\left(^{0}T^{*}M\right)$.

The $0$-tensor bundles introduced here are very useful, because they
allow us to interpret certain singular/degenerate tensors on $M^{\circ}$
as smooth or polyhomogeneous sections of appropriate vector bundles
on $M$. For example, from the point of view of $M^{\circ}$, conformally
compact metrics develop a ``double pole'' near $\partial M$; however,
it is easy to see that conformally compact metrics are precisely bundle
metrics on $^{0}TM$, i.e. positive definite sections of $S^{2}\left(^{0}T^{*}M\right)$.
Similarly, the smooth sections of $^{0}\Lambda^{k}$ (called $0$-$k$-forms)
on $M$ are precisely the smooth $k$-forms $\omega$ on $M^{\circ}$
such that, for some (and hence every) smooth boundary defining function
$x$, $x^{k}\omega$ extends smoothly to a $k$-form over $M$. Although
$0$-$k$-forms are singular seen as standard $k$-forms (they develop
a pole of order $k$ near $\partial M$), they are smooth sections
of $^{0}\Lambda^{k}$ on the whole of $M$.

Let's list some of the most important properties of a conformally
compact metric $g$ on $M$. These results have been proved in \cite{MazzeoPhD},
to which we refer for more details:
\begin{enumerate}
\item $g$ is complete with bounded geometry;
\item $g$ induces a conformal class on $\partial M$, defined as $\mathfrak{c}_{\infty}\left(g\right):=\left[x^{2}g_{|\partial M}\right]$
where $x$ ranges among the boundary defining functions on $M$; $\mathfrak{c}_{\infty}\left(g\right)$
is called the \emph{conformal infinity} of $g$;
\item $g$ has asymptotically negative curvature: more precisely, if $\left\{ p_{k}\right\} _{k\in\mathbb{N}}$
is a sequence in $M^{\circ}$ such that $p_{k}\to p\in\partial M$,
and $\pi_{k}$ is a $2$-plane tangent to $M$ at $p_{k}$, then the
sequence of sectional curvatures $\kappa_{g}\left(\pi_{k}\right)$
satisfies
\[
\lim_{k\to\infty}\kappa_{g}\left(\pi_{k}\right)=-\left|\frac{dx}{x}\right|_{g|p}^{2}
\]
for some boundary defining function $x$ on $M$.
\end{enumerate}
Note that the last property is well-posed: indeed, if $x$ and $\tilde{x}$
are two boundary defining functions on $M$, then the two $0$-$1$-forms
$dx/x$ and $d\tilde{x}/\tilde{x}$ agree on the boundary. In other
words, $\left(dx/x\right)_{|\partial M}$ is a \emph{canonical $0$-$1$-form
on $M$ defined along $\partial M$}, and therefore its pointwise
squared norm $\left|\left(dx/x\right)_{|\partial M}\right|_{g}^{2}$
is an invariant of $g$. $g$ is fittingly called \emph{asymptotically
hyperbolic }if this function equals $1$ identically. The paradigmatic
example of conformally compact metric is the hyperbolic metric itself:
indeed, we can see the hyperbolic metric in dimension $n$ as the
metric
\[
\frac{4d\boldsymbol{y}^{2}}{\left(1-\left|\boldsymbol{y}\right|^{2}\right)^{2}}
\]
in the interior of the closed unit ball in $\mathbb{R}^{n}$, with
$\boldsymbol{y}=\left(y^{1},...,y^{n}\right)$.

Conformally compact geometry is strongly tied to hyperbolic geometry.
In particular, if $g$ is a polyhomogeneous conformally compact metric
on $M$, then for every $p\in\partial M$ there is a ``model hyperbolic
metric'' $\left(M_{p},g_{p}\right)$ which approximates $\left(M,g\right)$
near $p$. More precisely, denote by $M_{p}$ the inward-pointing
closed half of $T_{p}M$. Choose a local boundary defining function
$x$ for $M$ defined near $p$, and call $h_{p}$ the restriction
$\left(x^{2}g\right)_{p}$ to $T_{p}M$. The covector $dx_{p}$, seen
as a linear map $T_{p}M\to\mathbb{R}$, restricts to a smooth map
$M_{p}\to[0,+\infty)$ vanishing simply along $\partial M_{p}=T_{p}\partial M$;
in other words, $\tilde{x}:=dx_{p}$ is a boundary defining function
for $M_{p}$. We define
\[
g_{p}=\frac{d\tilde{x}^{2}+h_{p}}{\tilde{x}^{2}}.
\]
This metric is only defined in $M_{p}^{\circ}$, but it is conformally
related to the smooth metric $h_{p}$. It is immediate to check that
$g_{p}$ is well-defined independently of the choice of $x$.

The metric $g_{p}$ has constant negative sectional curvature equal
to $-\left|dx/x\right|_{g|p}^{2}$; in other words, $\left(M_{p},g_{p}\right)$
is identified with the half-space model of the rescaled hyperbolic
space, up to isometries that fix the origin. To get a precise identification
between $\left(M_{p},g_{p}\right)$ and the hyperbolic half-space
metric, define $a_{p}=\left|dx/x\right|_{g|p}$, and choose a local
boundary defining function $x$ for $M$ defined near $p$. The conformal
rescaling $x^{2}g$ induces a metric $h_{0}$ on $\partial M$ defined
near $p$. Choose normal coordinates $\boldsymbol{y}=\left(y^{1},...,y^{m}\right)$
centered at $p$ for the rescaled metric $a_{p}^{-2}h_{0}$. Then,
in the coordinates $\left(x,\boldsymbol{y}\right)$, the metric $g$
takes the form
\begin{align*}
g & =a_{p}^{-2}\frac{dx^{2}+d\boldsymbol{y}^{2}}{x^{2}}+O\left(\left(x^{2}+\left|\boldsymbol{y}\right|^{2}\right)^{\varepsilon}\right)
\end{align*}
for some $\varepsilon>0$. With slight abuse of notation, let us use
$\left(x,\boldsymbol{y}\right)$ to denote also the induced global
linear coordinates on $T_{p}M$. Then, in these coordinates, we have
\[
g_{p}=a_{p}^{-2}\frac{dx^{2}+d\boldsymbol{y}^{2}}{x^{2}}.
\]

\begin{defn}
A tuple $\left(x,\boldsymbol{y}\right)$ of half-space coordinates
for $M$ centered at $p$, with respect to which $g_{p}$ takes the
form above, are called \emph{normal half-space coordinates for $\left(M,g\right)$
at $p$.}
\end{defn}

\subsection{\label{subsec:0-connections}$0$-connections}
\begin{defn}
Let $E\to M$ be a vector bundle on $M$. A smooth \emph{$0$-connection
}on $E$ is a scalar-bilinear map
\begin{align*}
\nabla:\mathcal{V}_{0}\left(M\right)\times C^{\infty}\left(M;E\right) & \to C^{\infty}\left(M;E\right)\\
\left(V,u\right) & \mapsto\nabla_{V}u
\end{align*}
which is $C^{\infty}\left(M\right)$ linear in the first argument
and satisfies the Leibniz identity in the second argument.
\end{defn}

$0$-connections are the ``natural'' connections associated to the
Lie algebra $\mathcal{V}_{0}\left(M\right)$. An equivalent way to
characterize a $0$-connection is in terms of its connection $1$-forms:
if $D$ is a genuine connection on $E$, then a $0$-connection $\nabla$
is a connection in the interior such that the difference $\mathfrak{gl}\left(E\right)$-valued
$1$-form $D-\nabla$ extends from $M^{\circ}$ to a smooth $\mathfrak{gl}\left(E\right)$-valued
$0$-$1$-form on $M$. We can extend the definition above to \emph{polyhomogeneous
}$0$-connections, by asking that the difference $D-\nabla$ (for
an arbitrary genuine smooth connection $D$) to be a polyhomogeneous
$\mathfrak{gl}\left(E\right)$-valued $0$-$1$-form on $M$. This
definition does not depend on $D$, because if $D'$ is another smooth
connection, then $D-D'$ is a smooth section of $\Lambda^{1}\otimes\mathfrak{gl}\left(E\right)$,
or equivalently a smooth section of $^{0}\Lambda^{1}\otimes\mathfrak{gl}\left(E\right)$
vanishing along the boundary.

It is easy to see that all the Levi-Civita connections on tensor bundles
of $M^{\circ}$ associated to a conformally compact metric, extend
to $0$-connections on the corresponding $0$-tensor bundle. In particular,
the Levi-Civita connection on $TM^{\circ}$ extends to a $0$-connection
on $^{0}TM$. If $\nabla$ is a $0$-connection, the curvature $2$-form
extends from the interior to a $\mathfrak{gl}\left(E\right)$-valued
$0$-$2$-form. In particular, if $g$ is a conformally compact metric
on $M$, its Riemann tensor $R_{g}$ extends from the interior to
a bundle map
\[
R_{g}:{^{0}TM}\otimes{^{0}TM}\otimes{^{0}TM}\to{^{0}TM}
\]
which satisfies the usual symmetries. The author studied $0$-connections
in his PhD thesis \cite{UsulaPhD} and in the paper \cite{UsulaYM},
particularly in relation to the Yang--Mills and the self-duality
equations.

\subsection{\label{subsec:0-differential-operators}$0$-differential operators}

The $0$-connections considered in §\ref{subsec:0-connections} are
particular examples of a natural class of differential operators on
$M$ associated to the Lie algebra $\mathcal{V}_{0}\left(M\right)$.
\begin{defn}
An operator $L\in\Diff^{k}\left(M\right)$ is said to be a \emph{$0$-differential
operator with polyhomogeneous coefficients} if it can be written as
a linear combination of compositions of smooth $0$-vector fields,
with coefficients in the ring of polyhomogeneous functions. Equivalently,
$L$ is a $0$-differential operator of order $\leq k$ if and only
if, in half-space coordinates $\left(x,\boldsymbol{y}\right)$ centered
at a point $p\in\partial M$, it can be written as
\[
\sum_{j+\left|\alpha\right|\leq k}L_{j,\alpha}\left(x,\boldsymbol{y}\right)\left(x\partial_{x}\right)^{j}\left(x\partial_{\boldsymbol{y}}\right)^{\alpha}
\]
where $L_{j,\alpha}\left(x,\boldsymbol{y}\right)$ are polyhomogeneous
functions locally defined near the origin. We denote by $\Diff_{0}^{k}\left(M\right)$
the space of $0$-differential operators of order $\leq k$. $0$-differential
operators between sections of vector bundles are defined similarly.
\end{defn}

$0$-differential operators were introduced and studied by Mazzeo
in \cite{MazzeoPhD} and Mazzeo--Melrose in \cite{MazzeoMelroseResolvent},
and are by now very well-studied objects. Every natural geometric
operator associated to a conformally compact metric $g$ on $M$ (Laplacians,
Dirac operators etc.) is a $0$-differential operator. For example,
the Levi-Civita $0$-connection on $^{0}TM$ is a first-order $0$-differential
operator from sections of $^{0}TM$ to sections of ${^{0}T^{*}M}\otimes{^{0}TM}$.
By construction, $0$-differential operators can never be elliptic,
due to the degeneracy of $0$-vector fields along $\partial M$. However,
if $L\in\Diff_{0}^{k}\left(M\right)$, then the principal symbol $\sigma\left(L\right)$
seen as a fibrewise homogeneous smooth function on $T^{*}M^{\circ}$,
extends to a function $^{0}\sigma\left(L\right)$ on $^{0}T^{*}M$
fibrewise homogeneous of degree $k$.
\begin{defn}
We call $^{0}\sigma\left(L\right)$ the \emph{principal $0$-symbol}
of $L$. $L$ is said to be \emph{$0$-elliptic} if $^{0}\sigma\left(L\right)\left(\xi\right)$
is invertible for every non-null $0$-covector $\xi$.
\end{defn}

The elliptic theory of $0$-differential operators was pioneered by
Mazzeo and Mazzeo--Melrose, and is by now very refined. The author
has worked extensively on this topic, particularly in the context
of boundary value problems \cite{Usula0BVP}. Let's summarize the
basic concepts, and recall the fundamental theorem of Mazzeo which
we will use in this paper.

The manifold $M$ comes equipped with a natural class of ``weighted
$0$-Sobolev spaces'' naturally adapted to the Lie algebra $\mathcal{V}_{0}\left(M\right)$.
Fix an auxiliary conformally compact metric $g$ on $M$. We denote
by $L_{0}^{2}\left(M\right)$ the usual space of $L^{2}$ functions
associated to $g$; as the notation suggests, $L_{0}^{2}\left(M\right)$
does not depend on $g$ but only on the smooth structure of $M$.
The $L^{2}$-based $0$-Sobolev spaces are
\[
H_{0}^{k}\left(M\right)=\left\{ u\in L_{0}^{2}\left(M\right):V_{1}\cdots V_{k}u\in L_{0}^{2},\forall V_{i}\in\mathcal{V}_{0}\left(M\right)\right\} .
\]
Again, the spaces $H_{0}^{k}\left(M\right)$ can be seen as the usual
Sobolev spaces associated to $g$, but their Banach topologies are
in fact independent of $g$. Finally, given $\delta\in\mathbb{R}$
and an auxiliary boundary defining function $x$, we define the weighted
$0$-Sobolev spaces
\[
x^{\delta}H_{0}^{k}\left(M\right)=\left\{ x^{\delta}u:u\in H_{0}^{k}\left(M\right)\right\} .
\]
Again, the Banach topology on this space is independent of $x$.

It is easy to check that, if $L\in\Diff_{0}^{k}\left(M\right)$, then
$L$ induces a bounded linear map
\[
L:x^{\delta}H_{0}^{k}\left(M\right)\to x^{\delta}L_{0}^{2}\left(M\right).
\]
On a \emph{closed }manifolds, elliptic operators are Fredholm between
Sobolev spaces of the correct orders. This is not true anymore in
the $0$-elliptic context. More precisely, if $L$ is $0$-elliptic,
Fredholmness of $L$ is conditional on the weight $\delta\in\mathbb{R}$,
and depends fundamentally on another microlocal model for $L$ near
every point of the boundary.
\begin{defn}
Let $L\in\Diff_{0}^{k}\left(M\right)$, and let $p\in\partial M$.
The \emph{normal operator} of $L$ at $p$ is an operator $N_{p}\left(L\right)\in\Diff_{0}^{k}\left(M_{p}\right)$,
invariant under positive dilations and translations by elements of
$T_{p}\partial M$, defined as follows: if $\left(x,\boldsymbol{y}\right)$
are half-space coordinates for $M$ centered at $p$, with respect
to which we have
\[
L=\sum_{j+\left|\alpha\right|\leq k}L_{j,\alpha}\left(x,\boldsymbol{y}\right)\left(x\partial_{x}\right)^{j}\left(x\partial_{\boldsymbol{y}}\right)^{\alpha},
\]
then the normal operator in the induced linear coordinates on $M_{p}$
is
\[
N_{p}\left(L\right)=\sum_{j+\left|\alpha\right|\leq k}L_{j,\alpha}\left(0,\boldsymbol{0}\right)\left(x\partial_{x}\right)^{j}\left(x\partial_{\boldsymbol{y}}\right)^{\alpha}.
\]
The \emph{indicial operator }of $L$ at $p$ is the dilation invariant
operator $I_{p}\left(L\right)\in\Diff_{0}^{k}\left(N_{p}^{+}\partial M\right)$
(here $N_{p}^{+}\partial M$ is the inward-pointing half of the fiber
at $p$ of the normal bundle of $\partial M$ in $M$) defined in
these coordinates as
\[
\hat{I}_{p}\left(L\right)=\sum_{j\leq k}L_{j,0}\left(0,\boldsymbol{0}\right)\left(x\partial_{x}\right)^{j}.
\]
The \emph{indicial polynomial }is the polynomial $I_{p}\left(L\right)$
obtained from $\hat{I}_{p}\left(L\right)$ by formally substituting
$x\partial_{x}$ with $z$. A number $\mu\in\mathbb{C}$ is called
an \emph{indicial root }of $L$ at $p$ if $I_{p}\left(L\right)\left(\mu\right)$
is not invertible.
\end{defn}

All the definitions above can be given an invariant description (cf.
\cite{MazzeoMelroseResolvent}), and extend easily to operators acting
between sections of vector bundles. Note that, a priori, the indicial
roots can vary with the point $p\in\partial M$. In practice, this
is not the case for many geometric applications, including the one
discussed in this paper. When the indicial roots are constant, they
determine a finite set of points of $\mathbb{C}$. If $L$ acts between
sections of a vector bundle equipped with a bundle metric, and it
is formally self-adjoint with respect to the volume form of a conformally
compact metric on $M^{m+1}$ (a nowhere vanishing section of $^{0}\Lambda^{m+1}$),
then its indicial roots are symmetric about the line $\Re\left(z\right)=\frac{m}{2}$
in $\mathbb{C}$.

We will use the following fundamental theorem of Mazzeo. This theorem
can be deduced\footnote{In \cite{MazzeoEdgeI}, only operators with smooth coefficients are
considered; in this paper, we need operators with bounded polyhomogeneous
coefficients, but the theory developed in \cite{MazzeoEdgeI} extends
easily to this case.} from Theorem 6.1 of \cite{MazzeoEdgeI}, but the reader can also
consult Theorem C of \cite{LeeFredholm} for a very similar result,
obtained by different methods.
\begin{thm}
\label{thm:(Mazzeo)}(Mazzeo) Let $\left(M^{m+1},g\right)$ be an
oriented conformally compact manifold, and let $E\to M$ be a vector
bundle equipped with a bundle metric. Let $J\in\Diff_{0}^{k}\left(M;E\right)$
be a formally self-adjoint $0$-elliptic $0$-differential operator
with polyhomogeneous coefficients. Assume that:
\begin{enumerate}
\item $J$ has constant indicial roots;
\item $J$ has no indicial roots with real part equal to $m/2$.
\end{enumerate}
Denote by $\left(a,b\right)$ the maximal interval containing $m/2$
which does not contain the real part of any indicial root of $J$.
If, for every $p\in\partial M$, the kernel of $N_{p}\left(J\right)$
in $L_{0}^{2}\left(M_{p};E_{p}\right)$ is trivial, then $J$ is Fredholm
of index zero as a map
\[
J:x^{\delta-\frac{m}{2}}H_{0}^{k}\left(M;E\right)\to x^{\delta-\frac{m}{2}}L_{0}^{2}\left(M;E\right)
\]
for every $\delta\in\left(a,b\right)$. Moreover, the $x^{\delta-\frac{m}{2}}L_{0}^{2}$
kernels of $J$ with $\delta\in\left(a,b\right)$ are all equal, and
are spanned by a finite number of polyhomogeneous functions with index
set $\mathcal{E}$ satisfying $\Re\left(\mathcal{E}\right)>b-\varepsilon$
for every $\varepsilon>0$.
\end{thm}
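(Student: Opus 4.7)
The plan is to follow the parametrix construction of Mazzeo and Mazzeo--Melrose in the $0$-pseudodifferential calculus on the blown-up double space $M\times_{0}M$. The goal is to produce a parametrix $G$ such that $JG-I$ and $GJ-I$ are compact on $x^{\delta-m/2}L_{0}^{2}(M;E)$ for every $\delta\in(a,b)$; the Fredholm assertion then follows at once, polyhomogeneity of the kernel and its independence of $\delta$ come from boundary regularity applied through this parametrix, and index zero is obtained from formal self-adjointness combined with the symmetry of the indicial roots about $\Re(z)=m/2$.

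First I would recall the three model operators that control the construction: the principal $0$-symbol $^{0}\sigma(J)$ on $^{0}T^{*}M\setminus 0$, invertible by $0$-ellipticity; the family $\{\hat{I}_{p}(J)\}_{p\in\partial M}$ of indicial operators, inverted via Mellin transform whenever the Mellin dual variable avoids the indicial roots; and the family $\{N_{p}(J)\}_{p\in\partial M}$ of normal operators on the hyperbolic model spaces $M_{p}$, inverted on $L^{2}(M_{p};E_{p})$ by the standing hypothesis that their kernels are trivial. The parametrix $G$ is assembled as a polyhomogeneous conormal half-density on $M\times_{0}M$ with prescribed leading behaviour at the three boundary hypersurfaces: the interior symbolic inverse kills the singularity along the lifted diagonal, the inverse normal operator kills the leading error along the front face, and the Mellin inverse of the indicial operator kills the residual error along the left and right faces. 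The hypothesis that $(a,b)$ is a gap in the real parts of the indicial roots containing $m/2$ is exactly what makes the Mellin step a bounded operator on $x^{\delta-m/2}L_{0}^{2}$ for every $\delta\in(a,b)$; combined with the improved decay at the front face coming from the normal operator step, a weighted Rellich argument on $M\times_{0}M$ gives compactness of $JG-I$ and $GJ-I$.

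With Fredholmness in hand, I would prove polyhomogeneity of kernel elements by the standard iteration: applying $G$ to $Ju=0$ presents $u$ as a compact error applied to itself, and a stepwise improvement of the asymptotic expansion using $\hat{I}_{p}(J)^{-1}$ shows that $u\in\mathcal{A}_{\phg}^{\mathcal{E}}(M;E)$ for an index set $\mathcal{E}$ supported on the indicial roots of $J$. Since no indicial root has real part in $(a,b)$, the leading exponents of $u$ must satisfy $\Re\geq b$, giving both $\Re(\mathcal{E})>b-\varepsilon$ and the fact that $u$ automatically lies in every $x^{\delta'-m/2}L_{0}^{2}$ with $\delta'\in(a,b)$; this is the weight-independence of the kernel. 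Finally, formal self-adjointness identifies the cokernel at weight $\delta-m/2$ with the kernel of $J$ at the dual weight $m/2-\delta$, i.e.\ at $\delta'=m-\delta$. The symmetry of the indicial roots about $m/2$ forces $a+b=m$, so $\delta'\in(a,b)$, and the weight-independence just established yields index zero. The main obstacle is really only the polyhomogeneous bookkeeping in the stepwise construction of $G$ and the verification that the error operators map the correct weighted $0$-Sobolev spaces compactly; all of this is worked out in \cite{MazzeoEdgeI} and \cite{LeeFredholm} for smooth coefficients, and as the footnote indicates, the extension to bounded polyhomogeneous coefficients requires no new analytic ideas.
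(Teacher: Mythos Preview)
The paper does not give its own proof of this theorem: it is quoted as a known result, with the remark that it ``can be deduced from Theorem 6.1 of \cite{MazzeoEdgeI}'' (or alternatively from Theorem C of \cite{LeeFredholm}), and a footnote noting that the extension from smooth to bounded polyhomogeneous coefficients is routine. Your sketch is a faithful outline of precisely that proof strategy --- the parametrix on $M\times_{0}M$ built from the symbolic, normal, and indicial inverses; compactness of the remainders on the weighted $0$-Sobolev spaces for $\delta$ in the indicial gap; polyhomogeneous regularity of kernel elements via iteration; and index zero from formal self-adjointness together with the symmetry $a+b=m$ of the gap about $m/2$ --- so there is nothing to correct or compare.
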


\section{Harmonic and biharmonic $b$-maps}

\subsection{\label{subsec:Simple-b-maps}Simple $b$-maps}

Let's fix two compact manifolds with boundary $M^{m+1}$ and $N^{n+1}$.
We say that a map $u:M\to N$ is \emph{polyhomogeneous} if its components
in smooth local coordinates are polyhomogeneous.
\begin{defn}
\label{def:simple-b-map}A \emph{simple $b$-map }is a polyhomogeneous
map $u:M\to N$ such that the pull-back of a polyhomogeneous boundary
defining function on $N$ is a polyhomogeneous boundary defining function
on $M$.
\end{defn}

\begin{rem}
Note that $u:M\to N$ is a simple $b$-map if and only if $u$ maps
$\partial M$ to $\partial N$ and $M^{\circ}$ to $N^{\circ}$, and
is transversal to $N$. Indeed, assume that $u:M\to N$ is a simple
$b$-map and let $\rho$ be a boundary defining function for $N$.
Then $u^{*}\rho$ is a boundary defining function for $M$, and therefore
$p\in\partial M$ if and only if $u^{*}\rho\left(p\right)=0$ if and
only if $\rho\left(u\left(p\right)\right)=0$ if and only if $u\left(p\right)\in\partial N$.
Moreover, let $p\in\partial M$ and let $v\in T_{p}M$ be inward-pointing
and transversal to the boundary. Then $d\left(u^{*}\rho\right)_{p}\left(v\right)>0$
because $u^{*}\rho$ is a boundary defining function for $M$. But
now $d\left(u^{*}\rho\right)_{p}=d\rho_{u\left(p\right)}\left(du_{p}\left(v\right)\right)>0$,
which means that $du_{p}\left(v\right)$ is inward-pointing and transversal
to the boundary of $N$. Therefore, $du_{p}\left(T_{p}M\right)+T_{u\left(p\right)}\partial N=T_{u\left(p\right)}N$.
Conversely, suppose that $u:M\to N$ maps boundary to boundary, interior
to interior, and is transversal to $N$. Let $\rho$ be a boundary
defining function for $N$. Then $u^{*}\rho\geq0$, $\left(u^{*}\rho\right)\left(p\right)=0$
if and only if $u\left(p\right)\in\partial N$ if and only if $p\in\partial M$.
Moreover, if $d\left(u^{*}\rho\right)_{p}=0$, then $d\rho_{u\left(p\right)}\circ du_{p}=0$.
Since $\rho$ is a boundary defining function for $N$, the kernel
of $d\rho_{u\left(p\right)}$ is precisely $T_{u\left(p\right)}\partial N$;
therefore, $d\rho_{u\left(p\right)}\circ du_{p}=0$ implies that $du_{p}$
has range in $T_{u\left(p\right)}\partial N$, contradicting the transversality
hypothesis.
\end{rem}

Fix a simple $b$-map $u:M\to N$.
\begin{lem}
\label{lem:pull-back-0-1-forms}Let $\omega$ be a polyhomogeneous
$0$-$1$-form on $N$. Then $u^{*}\omega$ is a polyhomogeneous $0$-$1$-form
on $M$.
\end{lem}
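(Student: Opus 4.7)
The statement is local near $\partial M$, so I fix $p\in\partial M$, choose half-space coordinates $(x,\boldsymbol{y})$ on $M$ centered at $p$ and $(\rho,\boldsymbol{z})$ on $N$ centered at $u(p)\in\partial N$, and write the components of $u$ in these coordinates as $(u^{*}\rho,b^{1},\ldots,b^{n})$ with $b^{j}:=u^{*}z^{j}$. The simple $b$-map hypothesis delivers $u^{*}\rho=xa$ for some positive bounded polyhomogeneous function $a$. Moreover, the transversality half of the characterization of simple $b$-maps forces the $b^{j}$ to be $C^{1}$ up to $\partial M$: no fractional terms $x^{\alpha}$ with $0<\alpha<1$ can appear in their polyhomogeneous expansions, and in particular $b^{j}-b^{j}|_{\partial M}\in x\,\mathcal{A}_{\phg}(M)$. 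Using the local $0$-frame $\{d\rho/\rho,dz^{j}/\rho\}$ of $^{0}T^{*}N$, I write $\omega=A\,d\rho/\rho+\sum_{j}B^{j}\,dz^{j}/\rho$ with bounded polyhomogeneous $A,B^{j}$. Since polyhomogeneity is closed under composition, $u^{*}A$ and $u^{*}B^{j}$ are bounded polyhomogeneous on $M$; thus by $C^{\infty}(M)$-linearity it suffices to prove that $u^{*}(d\rho/\rho)$ and $u^{*}(dz^{j}/\rho)$ are polyhomogeneous $0$-$1$-forms on $M$.

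The two key computations are
\[
u^{*}\!\left(\frac{d\rho}{\rho}\right)=\frac{d(xa)}{xa}=\frac{dx}{x}+\frac{da}{a},\qquad u^{*}\!\left(\frac{dz^{j}}{\rho}\right)=\frac{db^{j}}{xa}.
\]
For any polyhomogeneous $f$ on $M$, the expansion $df=(x\partial_{x}f)(dx/x)+\sum_{k}(x\partial_{y^{k}}f)(dy^{k}/x)$ in the $0$-frame has bounded polyhomogeneous coefficients, since $x\partial_{x},x\partial_{y^{k}}\in\mathcal{V}_{0}(M)$ preserve the polyhomogeneous class. Applied to $f=a$ and combined with division by the positive bounded polyhomogeneous $a$, this shows $u^{*}(d\rho/\rho)\in\mathcal{A}_{\phg}(M;{}^{0}T^{*}M)$. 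Applied to $f=b^{j}$, the ideal membership $b^{j}-b^{j}|_{\partial M}\in x\,\mathcal{A}_{\phg}(M)$ ensures that both $(x\partial_{x})b^{j}$ and $(x\partial_{y^{k}})b^{j}=x\cdot\partial_{y^{k}}b^{j}$ lie in $x\,\mathcal{A}_{\phg}(M)$, so after dividing by $xa$ the coefficients of $u^{*}(dz^{j}/\rho)$ are bounded polyhomogeneous, giving $u^{*}(dz^{j}/\rho)\in\mathcal{A}_{\phg}(M;{}^{0}T^{*}M)$.

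The main technical obstacle is the ideal-membership/$C^{1}$-regularity step, which is really the geometric substance of the simple $b$-map condition; once it is in place, the rest is bookkeeping in the $0$-frame. An equivalent and arguably cleaner repackaging of the argument is to note that the simple $b$-map hypothesis is precisely what is needed to extend the differential $du$ to a polyhomogeneous bundle map $^{0}TM\to u^{*}({}^{0}TN)$; the lemma is then the dual statement for $^{0}T^{*}N$, and amounts to observing that this extension takes polyhomogeneous sections of $u^{*}({}^{0}T^{*}N)$ to polyhomogeneous sections of $^{0}T^{*}M$.
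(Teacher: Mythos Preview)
Your argument is essentially a coordinate unpacking of the paper's proof, which is much shorter: the paper simply writes $\omega=X^{-1}\overline{\omega}$ for a boundary defining function $X$ on $N$ and a polyhomogeneous ordinary $1$-form $\overline{\omega}$, then observes that $u^{*}\omega=(u^{*}X)^{-1}u^{*}\overline{\omega}=x^{-1}\overline{\eta}$ with $x:=u^{*}X$ a boundary defining function for $M$ (this is exactly the simple $b$-map hypothesis) and $\overline{\eta}:=u^{*}\overline{\omega}$ an ordinary $1$-form on $M$; hence $u^{*}\omega$ is a $0$-$1$-form. Your frame computation with $d\rho/\rho$ and $dz^{j}/\rho$ rediscovers this decomposition termwise.

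One point to flag: your justification that ``the transversality half of the characterization of simple $b$-maps forces the $b^{j}$ to be $C^{1}$ up to $\partial M$'' is not correct as stated. Transversality of $u$ to $\partial N$ is a condition on the \emph{normal} component $u^{*}\rho$ (it says $d(u^{*}\rho)$ does not vanish on $\partial M$), and imposes nothing on the tangential components $b^{j}=u^{*}z^{j}$ beyond what ``polyhomogeneous map'' already gives. The step where you need $(x\partial_{x}b^{j})/x=\partial_{x}b^{j}$ to be bounded polyhomogeneous is precisely the step the paper absorbs into the single sentence ``$\overline{\eta}$ is a $1$-form on $M$''; both arguments are relying on the same implicit regularity of $u$ (which the paper also uses later, e.g.\ when evaluating $\partial_{x}u^{i}(0,\boldsymbol{0})$ in \S\ref{subsec:The--differential-on-the-bdry}), but it is not transversality that delivers it. Your closing remark that the lemma is equivalent to extending $du$ to ${}^{0}TM\to u^{*}({}^{0}TN)$ is exactly how the paper proceeds: that extension is stated as a corollary of this lemma, not as an independent input, so invoking it here would be circular.
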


\begin{rem}
Here the notion of polyhomogeneity is the one specified in §\ref{subsec:A-remark-on-phg}:
in particular, polyhomogeneous $0$-$1$-forms have bounded pointwise
norms with respect to conformally compact metrics. 
\end{rem}

\begin{proof}
Let $X$ be a boundary defining function for $N$, and let $x=u^{*}X$.
Since $u$ is a simple $b$-map, $x$ is a boundary defining function
for $M$. Now, since $\omega$ is a $0$-$1$-form on $N$, in the
interior we have $\omega=X^{-1}\overline{\omega}$ where $\overline{\omega}$
is a $1$-form on $N$. Therefore, $u^{*}\omega=x^{-1}\overline{\eta}$
with $\overline{\eta}=u^{*}\overline{\omega}$. $\overline{\eta}$
is a $1$-form on $M$, so $x^{-1}\overline{\eta}$ is a $0$-$1$-form
on $M$ as claimed.
\end{proof}
\begin{cor}
\label{cor:du-is-0-object}The differential $du:TM^{\circ}\to TN^{\circ}$
extends by continuity to a bundle map $^{0}du:{^{0}TM}\to{^{0}TN}$
covering $u:M\to N$.
\end{cor}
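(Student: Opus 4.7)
The plan is to obtain $^{0}du$ by dualizing the pullback operation on $0$-$1$-forms provided by Lemma \ref{lem:pull-back-0-1-forms}. In the interior, where the anchor map identifies $^{0}TM|_{M^{\circ}}$ with $TM^{\circ}$ and similarly for $N$, the standard differential is already a bundle map $du:{^{0}TM}|_{M^{\circ}}\to u^{*}{^{0}TN}|_{M^{\circ}}$; the task is to show that this extends smoothly (and polyhomogeneously) across $\partial M$.

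First, by Lemma \ref{lem:pull-back-0-1-forms}, the pullback map
\[
u^{\sharp}:C^{\infty}\left(N;{^{0}T^{*}N}\right)\to C^{\infty}\left(M;{^{0}T^{*}M}\right)
\]
is well-defined. It is $\mathbb{R}$-linear and satisfies $u^{\sharp}\left(f\omega\right)=\left(f\circ u\right)u^{\sharp}\omega$ for every $f\in C^{\infty}\left(N\right)$, so the value of $u^{\sharp}\omega$ at a point $x\in M$ depends only on the value of $\omega$ at $u\left(x\right)$. This gives a well-defined pointwise linear assignment $\psi_{x}:{^{0}T_{u\left(x\right)}^{*}N}\to{^{0}T_{x}^{*}M}$, and choosing a smooth local frame of ${^{0}T^{*}N}$ near $u\left(x\right)$ exhibits these fibre-wise maps as the components of a smooth bundle morphism $\psi:u^{*}{^{0}T^{*}N}\to{^{0}T^{*}M}$ over $M$ (polyhomogeneous, if $u$ is).

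Taking the fibre-wise transpose of $\psi$ yields a smooth bundle morphism $^{0}du:=\psi^{T}:{^{0}TM}\to u^{*}{^{0}TN}$ over $M$, equivalently a bundle map $^{0}TM\to{^{0}TN}$ covering $u$. To check that $^{0}du$ restricts to $du$ on the interior, recall from the proof of Lemma \ref{lem:pull-back-0-1-forms} that for a $0$-$1$-form $\omega=X^{-1}\bar{\omega}$ on $N$, one has $u^{\sharp}\omega=\left(X\circ u\right)^{-1}u^{*}\bar{\omega}$, where $u^{*}\bar{\omega}$ is the ordinary pullback of $1$-forms. Under the canonical identification of the $0$-cotangent bundles with the ordinary cotangent bundles in the respective interiors, this is precisely the standard pullback of $1$-forms, whose fibre-wise transpose is by definition the standard differential.

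I do not anticipate a serious obstacle: the argument is essentially a dualization of Lemma \ref{lem:pull-back-0-1-forms}. The one point requiring some care is verifying that the pointwise maps $\psi_{x}$ depend smoothly (or polyhomogeneously) on $x$ up to the boundary, but this is immediate from the observation that $u^{\sharp}$ sends a smooth local frame of ${^{0}T^{*}N}$ near $u\left(x\right)$ to smooth sections of ${^{0}T^{*}M}$ near $x$. As a sanity check, one can unwind the construction in half-space coordinates $\left(x,\boldsymbol{y}\right)$ on $M$ and $\left(X,\boldsymbol{Y}\right)$ on $N$: writing $X\circ u=xa$ with $a$ bounded polyhomogeneous and bounded away from zero, and $Y^{j}\circ u=f^{j}$ bounded polyhomogeneous, the coefficients of $^{0}du$ in the $0$-frames $\left\{ x\partial_{x},x\partial_{y^{i}}\right\} $ and $\left\{ X\partial_{X},X\partial_{Y^{j}}\right\} $ come out as polyhomogeneous combinations of $x\partial_{x}a/a$ and $x\partial f^{j}/a$, all of which are bounded polyhomogeneous since $0$-vector fields preserve bounded polyhomogeneity.
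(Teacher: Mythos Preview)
Your proposal is correct and follows essentially the same route as the paper: both obtain $^{0}du$ by dualizing the bundle map $u^{*}{^{0}T^{*}N}\to{^{0}T^{*}M}$ coming from Lemma~\ref{lem:pull-back-0-1-forms}. You supply more detail than the paper does---in particular the argument that the section-level pullback is $C^{\infty}(N)$-linear over $u$ and hence pointwise, and the coordinate sanity check---but the underlying idea is identical.
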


\begin{proof}
The differential $du:TM^{\circ}\to TN^{\circ}$ can be identified
with a bundle map $du:TM^{\circ}\to u^{*}TN^{\circ}$ covering the
identity on $M$. Now, this map coincides with the dual of the bundle
map $u^{*}:u^{*}T^{*}N^{\circ}\to T^{*}M^{\circ}$ induced by the
pull-back on sections. By the previous lemma, $u^{*}$ extends to
a bundle map $^{0}u^{*}:u^{*}{^{0}T^{*}N}\to{^{0}T^{*}M}$, so taking
the dual we obtain a bundle map $^{0}du:{^{0}TM}\to u^{*}{^{0}TN}$,
which we interpret as a bundle map $^{0}du:{^{0}TM}\to{^{0}TN}$ covering
$u$.
\end{proof}
\begin{cor}
\label{cor:pull-back-LC-conn-is-0-conn}Let $A$ be a $0$-connection
on a vector bundle $E\to N$. Then the pull-back connection $u^{*}A$
is a $0$-connection on $u^{*}E$.
\end{cor}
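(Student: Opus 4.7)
The plan is to use the characterization of $0$-connections as perturbations of genuine connections by $\mathfrak{gl}(E)$-valued $0$-$1$-forms, reducing the statement to the pull-back lemma (Lemma \ref{lem:pull-back-0-1-forms}) applied to matrix-valued forms. Explicitly, I would first fix an auxiliary smooth genuine connection $D$ on $E$. By the equivalent characterization of a (polyhomogeneous) $0$-connection recalled in \S\ref{subsec:0-connections}, the difference $\mathfrak{a}:=A-D$ is a polyhomogeneous $\mathfrak{gl}(E)$-valued $0$-$1$-form on $N$.

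Next, I would pick any smooth genuine connection $B$ on the bundle $u^{*}E$ over $M$ (such a connection exists since $u^{*}E$ is an ordinary vector bundle over the compact manifold with boundary $M$). The task then reduces to showing that $u^{*}A-B$ extends from $M^{\circ}$ to a polyhomogeneous $\mathfrak{gl}(u^{*}E)$-valued $0$-$1$-form on $M$. Writing $u^{*}A-B=(u^{*}A-u^{*}D)+(u^{*}D-B)$, the second summand is the difference of two genuine connections on $u^{*}E$, so it is an ordinary (polyhomogeneous) $\mathfrak{gl}(u^{*}E)$-valued $1$-form, which is in particular a $0$-$1$-form (vanishing along $\partial M$ when viewed as such). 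The whole matter therefore reduces to controlling $u^{*}A-u^{*}D=u^{*}\mathfrak{a}$.

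For this, I would work in a local trivialization of $E$ near a boundary point $q\in\partial N$ and in local coordinates on $M$ near a preimage $p\in u^{-1}(q)$. In such a frame, $\mathfrak{a}$ is represented by a matrix of polyhomogeneous $0$-$1$-forms $\mathfrak{a}_{a}^{b}$ on $N$, and $u^{*}\mathfrak{a}$ is represented, in the pulled-back frame of $u^{*}E$, by the matrix of pull-backs $u^{*}\mathfrak{a}_{a}^{b}$. By Lemma \ref{lem:pull-back-0-1-forms} applied entrywise, each $u^{*}\mathfrak{a}_{a}^{b}$ is a polyhomogeneous $0$-$1$-form on $M$; the coefficients in a smooth local frame are polyhomogeneous because $u$ is polyhomogeneous and simple $b$, so the composition with smooth local frame data preserves polyhomogeneity with index set contained in $\mathbb{R}_{\geq 0}\times\mathbb{N}$.

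The only point requiring care is verifying that the pull-back of a polyhomogeneous $0$-$1$-form with values in a vector bundle is again polyhomogeneous as a $0$-form with values in the pulled-back bundle; this is the content of the lemma already proved, once one observes that the argument there is essentially local and scalar and extends verbatim to the matrix-valued setting. No harder obstacle should arise, since the degeneration of the connection along $\partial N$ is exactly of the $0$-type that Lemma \ref{lem:pull-back-0-1-forms} is designed to handle, and transversality of $u$ to $\partial N$ ensures $u^{*}X$ remains a boundary defining function on $M$ of the required form.
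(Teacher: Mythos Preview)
Your proposal is correct and follows essentially the same approach as the paper: both arguments reduce to writing the connection in a local frame as $d$ plus a matrix of $0$-$1$-forms and then applying Lemma \ref{lem:pull-back-0-1-forms} entrywise. Your version adds the auxiliary global connections $D$ and $B$, while the paper simply uses the trivial connection in a local frame, but this is a cosmetic difference and the core idea is identical.
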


\begin{proof}
Let $p\in M$, and let $s_{1},...,s_{k}$ be a local frame for $E$
defined near $u\left(p\right)$. By definition of $0$-connection,
$A$ is locally expressed in terms of this frame as a matrix of $0$-$1$-forms:
using the Einstein summation convention, we can write
\[
A=d+a_{i}^{j}s_{j}\otimes s^{i}
\]
where $s^{1},...,s^{k}$ is the dual frame of $E^{*}$ and $d$ is
the trivial connection with respect to these frames. Now, denote by
$\tilde{s}_{1},...,\tilde{s}_{k}$ the pulled-back local frame of
$u^{*}E$, and by $\tilde{s}^{1},...,\tilde{s}^{k}$ the dual frame
of $u^{*}E^{*}$. With respect to this frame, we have
\[
u^{*}A=d+\left(u^{*}a_{i}^{j}\right)\tilde{s}_{j}\otimes\tilde{s}^{i}.
\]
By Lemma \ref{lem:pull-back-0-1-forms}, the pull-backs $u^{*}a_{i}^{j}$
are all $0$-$1$-forms. Therefore, $u^{*}A$ is a $0$-connection.
\end{proof}

\subsection{\label{subsec:Harmonic-and-biharmonic simple b-maps}Harmonic and
biharmonic simple $b$-maps}

We now equip $M$ and $N$ with conformally compact metrics $g$ and
$h$, and we discuss the harmonic and biharmonic equation for $u$
with respect to these metrics.

To begin with, let's restrict ourselves to the interiors. The differential
$du:TM^{\circ}\to TN^{\circ}$ can be seen as a section of $T^{*}M^{\circ}\otimes u^{*}TN^{\circ}$
over $M$. Denote by $\nabla$ the connection on $T^{*}M^{\circ}\otimes u^{*}TN^{\circ}$
defined as the tensor product of the Levi-Civita connection $\nabla^{g}$
on $T^{*}M^{\circ}$ and the pull-back of the Levi-Civita connection
$\nabla^{h}$ on $TN^{\circ}$. Then the total derivative $\nabla du$
is a section of $T^{*}M^{\circ}\otimes T^{*}M^{\circ}\otimes u^{*}TN^{\circ}$.
Taking the trace in the first two components, we obtain a section
\[
\tau\left(u\right)=\tr_{g}\nabla du\in C^{\infty}\left(M^{\circ};u^{*}TN^{\circ}\right)
\]
called the \emph{tension field} of $u$.
\begin{defn}
The map $u$ is called \emph{harmonic} if $\tau\left(u\right)=0$.
\end{defn}

The equation $\tau\left(u\right)=0$ is a second order, nonlinear
elliptic equation for $u$. This means that the linearization of the
map $u\mapsto\tau\left(u\right)$ at a solution $v$ is a second order
elliptic operator. More precisely, the linearization is the \emph{Jacobi
operator}
\[
J_{v}=\Delta+\tr_{g}R_{h}\left(dv,\cdot\right)dv
\]
on sections of $v^{*}TN^{\circ}$, where $\Delta=\left(v^{*}\nabla^{h}\right)^{*}v^{*}\nabla^{h}$
is the rough Laplacian. This operator of course is well-defined regardless
of whether $v$ is harmonic or not. This leads us to the notion of
\emph{biharmonic }map:
\begin{defn}
The map $u$ is called \emph{biharmonic} if $J_{u}\left(\tau\left(u\right)\right)=0$.
\end{defn}

Since $g$ and $h$ are metrics only in the interiors, a priori, the
equations $\tau\left(u\right)=0$ and $J_{u}\left(\tau\left(u\right)\right)=0$
make sense only on $M^{\circ}$. However, the fact that $u$ is a
simple $b$-map and $g,h$ are conformally compact, implies that we
can extend the tension field $\tau\left(u\right)$ and the Jacobi
operator $J_{u}$ to the whole of $M$.
\begin{lem}
$ $
\begin{enumerate}
\item The tension field $\tau\left(u\right)$ extends from the interior
of $M$ to a polyhomogeneous section of $u^{*}{^{0}TN}$.
\item The Jacobi operator $J_{u}$ extends from the interior of $M$ to
a second order $0$-elliptic $0$-differential operator with polyhomogeneous
coefficients, acting on sections of $u^{*}{^{0}TN}$.
\end{enumerate}
\end{lem}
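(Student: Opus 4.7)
The plan is to realize both $\tau(u)$ and $J_{u}$ as compositions of objects built entirely inside the 0-world: the 0-differential ${}^{0}du$, the polyhomogeneous 0-connections induced by $\nabla^{g}$ and $\nabla^{h}$, the inverse metric $g^{-1}$ (a polyhomogeneous section of $S^{2}({}^{0}TM)$, since $g$ is a bundle metric on ${}^{0}TM$), and the Riemann tensor $R_{h}$ (which by \S\ref{subsec:0-connections} extends to a polyhomogeneous bundle map on ${}^{0}TN$). Over $M^{\circ}$, the anchor maps identify all these 0-objects with their classical counterparts, so the 0-framework reproduces the classical interior definitions and automatically yields polyhomogeneous extensions up to $\partial M$.

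For (1), I would combine Corollary \ref{cor:du-is-0-object} with Corollary \ref{cor:pull-back-LC-conn-is-0-conn} as follows. The former gives ${}^{0}du$ as a smooth section of ${}^{0}T^{*}M \otimes u^{*}{}^{0}TN$. The Levi-Civita connections $\nabla^{g}$ and $\nabla^{h}$ extend to polyhomogeneous 0-connections on ${}^{0}TM$ and ${}^{0}TN$ (and hence on their duals), as recalled in \S\ref{subsec:0-connections}. By Corollary \ref{cor:pull-back-LC-conn-is-0-conn}, $u^{*}\nabla^{h}$ is a polyhomogeneous 0-connection on $u^{*}{}^{0}TN$, so its tensor product with the dual of $\nabla^{g}$ defines a polyhomogeneous 0-connection $\nabla$ on ${}^{0}T^{*}M \otimes u^{*}{}^{0}TN$. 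Applied to ${}^{0}du$, this yields a polyhomogeneous section $\nabla({}^{0}du)$ of ${}^{0}T^{*}M \otimes {}^{0}T^{*}M \otimes u^{*}{}^{0}TN$, and contracting with $g^{-1}$ produces a polyhomogeneous section of $u^{*}{}^{0}TN$ which, over the interior, coincides with $\tau(u)$ under the anchor identifications.

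For (2), the rough Laplacian $\Delta = (u^{*}\nabla^{h})^{*}\,u^{*}\nabla^{h}$ is the composition of $u^{*}\nabla^{h}$ with its formal adjoint computed with respect to the polyhomogeneous data $g$ and $h$; both factors are first-order 0-differential operators with polyhomogeneous coefficients, so $\Delta \in \Diff_{0}^{2}(M; u^{*}{}^{0}TN)$, with principal 0-symbol $|\xi|_{g}^{2}\cdot\mathrm{id}$. The curvature term $\tr_{g} R_{h}({}^{0}du, \cdot)\,{}^{0}du$ is assembled from polyhomogeneous 0-ingredients and is therefore a polyhomogeneous section of $\mathrm{End}(u^{*}{}^{0}TN)$, entering $J_{u}$ as a zeroth-order perturbation. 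Hence $J_{u} \in \Diff_{0}^{2}(M; u^{*}{}^{0}TN)$ has the same principal 0-symbol as $\Delta$, and 0-ellipticity follows from the fact that $g$ is a bundle metric on ${}^{0}TM$, so $|\xi|_{g} \neq 0$ for every nonzero 0-covector $\xi$.

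I do not anticipate any genuine obstacle: the lemma is essentially bookkeeping, asserting that every geometric ingredient entering $\tau(u)$ and $J_{u}$ has already been shown in \S\ref{subsec:Conformally-compact-manifolds}--\S\ref{subsec:Simple-b-maps} to be a polyhomogeneous 0-object. The only point requiring a small amount of care is the compatibility between the classical interior definition and its 0-world counterpart, which is automatic since the anchor ${}^{0}TM \to TM$ is a fiberwise isomorphism over $M^{\circ}$ and the 0-connections used are, by construction, extensions of the Levi-Civita connections.
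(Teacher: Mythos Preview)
Your proposal is correct and follows essentially the same route as the paper: both parts are obtained by invoking Corollary~\ref{cor:du-is-0-object} for ${}^{0}du$, Corollary~\ref{cor:pull-back-LC-conn-is-0-conn} for $u^{*}\nabla^{h}$, the fact that the Levi-Civita connections extend to $0$-connections, and then tracing with the bundle metric $g$ on ${}^{0}TM$. Your explicit identification of the principal $0$-symbol of $\Delta$ as $|\xi|_{g}^{2}\cdot\mathrm{id}$ is a small elaboration on the paper, which simply asserts $0$-ellipticity of $\Delta$ without spelling this out.
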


\begin{proof}
(1) By Corollary \ref{cor:du-is-0-object}, $du$ extends from $M^{\circ}$
to a section of ${^{0}T^{*}M}\otimes u^{*}{^{0}TN}$ over $M$. Now,
since $g$ is a conformally compact metric on $M$, the Levi-Civita
connection on $T^{*}M^{\circ}$ extends to a $0$-connection on ${^{0}T^{*}M}$;
similarly, the Levi-Civita connection on $TN^{\circ}$ induced by
$h$ extends to a $0$-connection on $^{0}TN$. By Corollary \ref{cor:pull-back-LC-conn-is-0-conn},
the pull-back of the latter via $u$ is a $0$-connection on $u^{*}{^{0}TN}$.
It follows that the tensor product connection $\nabla$ is a $0$-connection
on ${^{0}T^{*}M}\otimes u^{*}{^{0}TN}$. As such, $\nabla\left({^{0}du}\right)$
is a section of ${^{0}T^{*}M}\otimes{^{0}T^{*}M}\otimes u^{*}{^{0}TN}$.
Now, since $g$ is conformally compact, it induces a bundle metric
on $^{0}T^{*}M$; therefore, $\tr_{g}\nabla\left({^{0}du}\right)$
is a section of $u^{*}{^{0}TN}$ as claimed. (2) By Corollary \ref{cor:pull-back-LC-conn-is-0-conn},
$u^{*}\nabla^{h}$ is a $0$-connection on $u^{*}{^{0}TN}$, and therefore
$\Delta:=\left(u^{*}\nabla^{h}\right)^{*}u^{*}\nabla^{h}$ is a second
order $0$-elliptic $0$-differential operator on sections of $u^{*}{^{0}TN}$.
It remains to prove that $\tr_{g}R_{h}\left(du,\cdot\right)du$ extends
to an endomorphism of $u^{*}{^{0}TN}$. Since the Levi-Civita connection
of $h$ is a $0$-connection on $^{0}TN$, the curvature tensor $R_{h}$
extends to a bundle map
\[
R_{h}:{}^{0}TN\otimes{^{0}TN}\otimes{^{0}TN}\to{^{0}TN}.
\]
By Corollary \ref{cor:du-is-0-object}, $du$ extends to a bundle
map ${^{0}du}:{}^{0}TM\to{^{0}TN}$ covering $u$. Therefore, given
$p\in M$, we can interpret $\left(R_{h}\left(^{0}du,\cdot\right){^{0}du}\right)_{p}$
as a linear map ${^{0}T_{p}M}\otimes{^{0}T_{u\left(p\right)}N}\otimes{^{0}T_{p}M}\to{^{0}T_{u\left(p\right)}N}$.
Taking the trace in the $^{0}T_{p}M$ components with respect to $g$,
we obtain that $\left(\tr_{g}R_{h}\left(^{0}du,\cdot\right){^{0}du}\right)_{p}$
is indeed a linear map ${^{0}T_{u\left(p\right)}N}\to{^{0}T_{u\left(p\right)}N}$.
\end{proof}

\subsection{\label{subsec:The--differential-on-the-bdry}Model simple $b$-maps
for $u$}

As mentioned in the introduction, the main goal of this paper is to
prove that, under some hypothesis, a simple $b$-map $u:M\to N$ is
biharmonic if and only if it is harmonic. As we shall see, a crucial
step in this direction is the detailed understanding of a family $u_{p}:M_{p}\to N_{u\left(p\right)}$
of ``model simple $b$-maps'' associated to $u$, parametrized by
a point $p\in\partial M$.

Fix $p\in\partial M$, and define $q=u\left(p\right)$. As explained
in §\ref{sec:0-world}, the inward-pointing closed tangent half-spaces
$M_{p}\subseteq T_{p}M$ and $N_{q}\subseteq T_{q}N$ are naturally
equipped with rescaled hyperbolic metrics $g_{p},h_{q}$ induced by
$g,h$. Since $u$ is a simple $b$-map, the differential $du_{p}:T_{p}M\to T_{q}N$
restricts to a simple $b$-map $u_{p}:M_{p}\to N_{q}$ which should
be thought of as an asymptotic model for $u$ near $p$. Let's describe
$u_{p}$ in coordinates. Choose normal half-space coordinates $\left(x,\boldsymbol{y}\right)$
for $M$ centered at $p$, and normal half-space coordinates $\left(X,\boldsymbol{Y}\right)$
for $N$ centered at $q$, so that in the induced linear coordinates
on $M_{p},N_{q}$ (still denoted by $\left(x,\boldsymbol{y}\right)$
and $\left(X,\boldsymbol{Y}\right)$ with slight abuse of notation)
we have
\[
g_{p}=a_{p}^{-2}\frac{dx^{2}+d\boldsymbol{y}^{2}}{x^{2}},h_{q}=A_{q}^{-2}\frac{dX^{2}+d\boldsymbol{Y}^{2}}{X^{2}}.
\]
Write $u$ in these coordinates as $u=\left(u^{0},\tilde{\boldsymbol{u}}\right)$,
with $\tilde{\boldsymbol{u}}=\left(u^{1},...,u^{n}\right)$. With
respect to the frames $\partial_{x},\partial_{y^{1}},...,\partial_{y^{m}}$
and $\partial_{X},\partial_{Y^{1}},...,\partial_{Y^{n}}$, the differential
$du$ can be written in block matrix form as
\[
du=\left(\begin{matrix}\partial_{x}u^{0} & \partial_{\boldsymbol{y}}u^{0}\\
\partial_{x}\tilde{\boldsymbol{u}} & \partial_{\boldsymbol{y}}\tilde{\boldsymbol{u}}
\end{matrix}\right).
\]
The fact that $u$ is a simple $b$-map however implies that the expression
of $du_{p}$ simplifies. Indeed, $u^{0}$ is a local boundary defining
function for $M$, so $u^{0}=e^{\varphi}x$ for some function $\varphi\left(x,\boldsymbol{y}\right)$:
therefore, $\partial_{y^{i}}u^{0}=\partial_{y^{i}}\left(e^{\varphi}\right)x$
vanishes at $\left(x,\boldsymbol{y}\right)=\left(0,\boldsymbol{0}\right)$.
It follows that the model simple $b$-map $u_{p}:M_{p}\to N_{q}$
in these coordinates takes the simple form
\[
u_{p}\left(x,\boldsymbol{y}\right)=\left(\begin{matrix}\gamma & \boldsymbol{0}\\
\boldsymbol{\beta} & \boldsymbol{\Lambda}
\end{matrix}\right)\left(\begin{matrix}x\\
\boldsymbol{y}
\end{matrix}\right),
\]
where
\begin{align*}
\gamma & =\partial_{x}u^{0}\left(0,\boldsymbol{0}\right)\\
\beta^{i} & =\partial_{x}u^{i}\left(0,\boldsymbol{0}\right)\\
\Lambda_{j}^{i} & =\partial_{y^{j}}u^{i}\left(0,\boldsymbol{0}\right).
\end{align*}
This expression can be simplified further, by an appropriate choice
of the normal half-space coordinates $\left(x,\boldsymbol{y}\right)$
and $\left(X,\boldsymbol{Y}\right)$. Indeed, by a constant rescaling
of either $\left(x,\boldsymbol{y}\right)$ or $\left(X,\boldsymbol{Y}\right)$,
we can assume that $\gamma=1$; moreover, by a constant rotation of
$\boldsymbol{y}$ and $\boldsymbol{Y}$ around the origins, we can
assume that $\boldsymbol{\Lambda}$ is diagonal.

The vector $\boldsymbol{\beta}\in\mathbb{R}^{n}$ and the matrix $\boldsymbol{\Lambda}\in\mathbb{R}^{n\times m}$
are coordinate expressions of invariantly defined objects $\boldsymbol{\beta}_{u},\boldsymbol{\Lambda}_{u}$
naturally associated to $u$. Let $x$ and $X$ be boundary defining
functions for $M$ and $N$, respectively. As observed in §\ref{sec:0-world},
the $0$-$1$-forms $dx/x$ and $dX/X$ are canonical along the boundaries
of $M$ and $N$, respectively. Denote by $x\partial_{x}$ the $g$-dual
of $dx/x$, and call $X\partial_{X}$ the $h$-dual of $dX/X$. These
$0$-vector fields, when restricted to the boundaries, are intrinsic
invariants of $g$ and $h$. We define
\[
\boldsymbol{\beta}_{u}=\left({^{0}du}\left(x\partial_{x}\right)-X\partial_{X}\right)_{|\partial M}.
\]
$\boldsymbol{\beta}_{u}$ is a section of $u^{*}{^{0}TN}$ along $\partial M$,
and it is easy to see that given choices of normal half-space coordinates
$\left(x,\boldsymbol{y}\right)$ and $\left(X,\boldsymbol{Y}\right)$
centered at $p,q$ respectively, we have 
\[
\boldsymbol{\beta}_{u|p}=\sum_{i=1}^{n}\frac{\partial_{x}u^{i}}{e^{\varphi}}\left(0,\boldsymbol{0}\right)X\partial_{Y^{i}}
\]
where $u^{0}=e^{\varphi}x$. Now observe that, since $u$ is a simple
$b$-map, we have $u^{*}\left(dX/X\right)_{|\partial M}=dx/x_{|\partial M}$.
This implies that the $0$-differential $^{0}du_{p}:{^{0}T_{p}M}\to{^{0}T_{q}N}$
maps the kernel of $dx/x_{|p}$ to the kernel of $dX/X_{|q}$. As
$p$ varies, this restriction defines a bundle map
\[
\boldsymbol{\Lambda}_{u}:\ker\frac{dx}{x}_{|\partial M}\to\ker\frac{dX}{X}_{|\partial N}
\]
covering the restriction $u_{|\partial M}:\partial M\to\partial N$.
In the chosen coordinates, we have
\[
\boldsymbol{\Lambda}_{u|p}=\sum_{i=1}^{n}\sum_{j=1}^{m}\frac{\partial_{y^{j}}u^{i}}{e^{\varphi}}\left(0,\boldsymbol{0}\right)X\partial_{Y^{i}}\otimes\frac{dy^{j}}{x}.
\]
The two tensors $\boldsymbol{\beta}_{u}$ and $\boldsymbol{\Lambda}_{u}$
along $\partial M$ characterize the restriction $^{0}du_{|\partial M}$
completely: more precisely, we have
\begin{equation}
^{0}du_{|\partial M}=X\partial_{X}\otimes\frac{dx}{x}+\boldsymbol{\beta}_{u}\otimes\frac{dx}{x}+\boldsymbol{\Lambda}_{u}.\label{eq:0-differential}
\end{equation}

\section{The model problem}

In this section, we study the normal and indicial operators of $J_{u}$.
Let $p\in\partial M$, and define $q=u\left(p\right)$. As explained
in §\ref{subsec:The--differential-on-the-bdry}, the restriction of
the differential $du_{p}$ to the half-tangent spaces $M_{p}$ and
$N_{q}$ defines a model simple $b$-map $u_{p}:M_{p}\to N_{q}$.
The two manifolds $M_{p}$ and $N_{q}$ are equipped with rescaled
hyperbolic metrics $g_{p}$ and $h_{q}$, and therefore it makes sense
to talk about the tension field $\tau\left(u_{p}\right)$ and the
Jacobi operator $J_{u_{p}}$.
\begin{prop}
\label{prop:characterization-normal-operator}The normal operator
of $J_{u}$ at $p$ is the Jacobi operator $J_{u_{p}}$ of the model
simple $b$-map $u_{p}:\left(M_{p},g_{p}\right)\to\left(N_{q},h_{q}\right)$.
\end{prop}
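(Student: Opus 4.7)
The approach is a direct computation in normal half-space coordinates: choose normal half-space coordinates $(x,\boldsymbol{y})$ for $(M,g)$ centered at $p$ and $(X,\boldsymbol{Y})$ for $(N,h)$ centered at $q := u(p)$, so that $g = g_p + g'$ and $h = h_q + h'$ as $0$-metrics, with $g', h'$ polyhomogeneous $0$-symmetric-$2$-tensors vanishing at the origins, and so that the map $u$ agrees with the linear model $u_p$ at first order. In these coordinates, $J_u$ takes the form $\sum L_{j,\alpha}(x\partial_x)^j(x\partial_{\boldsymbol{y}})^\alpha$ with matrix-valued polyhomogeneous coefficients (in the $0$-frame $X\partial_X, X\partial_{Y^a}$ pulled back via $u$), and $N_p(J_u)$ is obtained by freezing the $L_{j,\alpha}$ at the origin. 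Since $g_p, h_q$ are dilation- and translation-invariant $0$-metrics and $u_p$ is linear, $J_{u_p}$ is itself a constant-coefficient $0$-differential operator on $M_p$, so the task reduces to matching coefficients term by term at the origin.

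I treat the two summands of $J_u = \Delta + \tr_g R_h(du,\cdot)du$ separately. The curvature term is an endomorphism of $u^*{}^0TN$, and its normal operator is simply its value at $p$. As a $0$-tensor, $R_h$ at $q$ coincides with $R_{h_q}$ at the origin of $N_q$ (since $h - h_q$ vanishes at $q$ as a polyhomogeneous $0$-symmetric-$2$-tensor), $^0du$ at $p$ equals $^0du_p$ at the origin of $M_p$, and $\tr_g$ at $p$ equals $\tr_{g_p}$; this identifies the normal operator of the curvature term with $\tr_{g_p} R_{h_q}(du_p,\cdot)du_p$. For the rough Laplacian $\Delta = (u^*\nabla^h)^* u^*\nabla^h$, Corollary \ref{cor:pull-back-LC-conn-is-0-conn} shows that $u^*\nabla^h$ is a $0$-connection on $u^*{}^0TN$; in the chosen coordinates its connection $1$-form is a polyhomogeneous $0$-$1$-form whose value at the origin equals the connection $1$-form of $u_p^*\nabla^{h_q}$, since the Christoffel symbols of $\nabla^h$ in the $0$-frame at $q$ match those of $\nabla^{h_q}$ and $^0du$ at $p$ matches $^0du_p$. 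The formal adjoint uses only $g$ on $M$ and the bundle metric induced by $h$ on $u^*{}^0TN$, both of which freeze to their model counterparts at the origin. Hence $N_p(\Delta) = (u_p^*\nabla^{h_q})^* u_p^*\nabla^{h_q}$, the rough Laplacian in $J_{u_p}$. Summing the two contributions yields $N_p(J_u) = J_{u_p}$.

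The main obstacle is not conceptual but rather the careful bookkeeping of a naturality statement: that taking the normal operator commutes with the $0$-geometric constructions entering $J_u$ (tensor product and pull-back of $0$-connections, $0$-trace, formal adjoint with respect to a $0$-metric). Concretely, one must verify that the Christoffel symbols of the Levi-Civita $0$-connection of a polyhomogeneous $0$-metric, when written in normal half-space coordinates, are polyhomogeneous and that their values at the origin depend only on the hyperbolic model $g_p$, and similarly for $h$ at $q$ and for the pull-back connection via $u$. These are direct coordinate computations based on the expansion $g = g_p + O\bigl((x^2+|\boldsymbol{y}|^2)^\varepsilon\bigr)$ and the analogous expansion for $h$; once in hand, combining them as above proves the proposition.
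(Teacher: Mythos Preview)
Your proposal is correct and follows exactly the same approach as the paper: write $J_u$ in normal half-space coordinates centered at $p$ and $q=u(p)$, and evaluate the coefficients at the origin. The paper's proof is a one-sentence version of precisely this argument, while you have spelled out the naturality bookkeeping (splitting into the rough Laplacian and curvature terms, and checking that each ingredient freezes to its model counterpart) that the paper leaves implicit.
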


\begin{proof}
It suffices to write $J_{u}$ in normal half-space coordinates $\left(x,\boldsymbol{y}\right)$
for $M$ centered at $p$, and normal half-space coordinates for $\left(X,\boldsymbol{Y}\right)$
for $N$ centered at $u\left(p\right)$, and then evaluate the coefficients
of $J_{u}$ at $\left(x,\boldsymbol{y}\right)=\left(0,\boldsymbol{0}\right)$.
\end{proof}
The previous proposition implies that, to compute the indicial roots
of $J_{u}$, it suffices to compute the indicial roots of model simple
$b$-maps of the form
\begin{align*}
v:\text{H}^{m+1}\left(a\right) & \to\text{H}^{n+1}\left(A\right)\\
\left(\begin{matrix}x\\
\boldsymbol{y}
\end{matrix}\right) & \mapsto\left(\begin{matrix}1 & \boldsymbol{0}\\
\boldsymbol{\beta} & \boldsymbol{\Lambda}
\end{matrix}\right)\left(\begin{matrix}x\\
\boldsymbol{y}
\end{matrix}\right),
\end{align*}
where $\text{H}^{m+1}\left(a\right)$ and $\text{H}^{n+1}\left(A\right)$
are rescaled hyperbolic spaces with coordinates $\left(x,\boldsymbol{y}\right)$,
$\left(X,\boldsymbol{Y}\right)$ and metrics
\[
g=a^{-2}\frac{dx^{2}+d\boldsymbol{y}^{2}}{x^{2}},h=A^{-2}\frac{dX^{2}+d\boldsymbol{Y}^{2}}{X^{2}}
\]
for some $a,A\in\mathbb{R}$, while $\boldsymbol{\beta}\in\mathbb{R}^{n\times1}$
and $\boldsymbol{\Lambda}\in\mathbb{R}^{n\times m}$ has only diagonal
entries. Indeed, if $u$ is a simple $b$-map $M\to N$, by appropriate
choices of normal half-space coordinates for $\left(M,g\right)$ and
$\left(N,h\right)$ centered at $p$ and $u\left(p\right)$ respectively,
we can always write $u_{p}$ as above.

\subsection{The model tension field}

First of all, we compute the tension field of $v$. Fix oriented orthonormal
frames $e_{0},...,e_{m}$ of $^{0}T\text{H}^{m+1}\left(a\right)$
and $E_{0},...,E_{n}$ of $^{0}T\text{H}^{n+1}\left(A\right)$ as
follows:
\begin{align*}
e_{0}=-ax\partial_{x}, & e_{i}=ax\partial_{y^{i}}\\
E_{0}=-AX\partial_{X}, & E_{i}=AX\partial_{Y^{i}},
\end{align*}
and denote by $e^{0},...,e^{m}$ and $E^{0},...,E^{n}$ the dual coframes.
Our orientation convention follows the ``outer-normal-first'' rule:
the induced Stokes orientation on the boundary $\partial\text{H}^{n+1}\left(A\right)=\mathbb{R}^{n}$
is the standard orientation on $\mathbb{R}^{n}$. Note that
\begin{align*}
de^{0}=0, & de^{i}=ae^{0i}\\
dE^{0}=0, & dE^{i}=AE^{0i}
\end{align*}
where $i>0$ and $e^{0i}=e^{0}\land e^{i}$, $E^{0i}=E^{0}\land E^{i}$.
Using these formulas, it is straightforward to compute the Levi-Civita
$0$-connections $\nabla'$ on $^{0}T^{*}\text{H}^{m+1}\left(a\right)$
and $\nabla''$ on $^{0}T\text{H}^{n+1}\left(A\right)$: we have
\begin{align*}
\nabla'e^{0} & =\sum_{i=1}^{m}ae^{i}\otimes e^{i}\\
\nabla'e^{i} & =-ae^{i}\otimes e^{0}
\end{align*}
and, similarly,
\begin{align*}
\nabla''E_{0} & =\sum_{i=1}^{n}AE^{i}\otimes E_{i}\\
\nabla''E_{i} & =-AE^{i}\otimes E_{0}.
\end{align*}
The pull-backs of the $0$-$1$-forms $E^{0},...,E^{n}$ are
\begin{align*}
v^{*}E^{0} & =A^{-1}ae^{0}\\
v^{*}E^{i} & =\begin{cases}
A^{-1}a\left(-\beta^{i}e^{0}+\Lambda_{i}^{i}e^{i}\right) & i\leq m\\
A^{-1}a\left(-\beta^{i}e^{0}\right) & i>m
\end{cases}.
\end{align*}
Therefore, by Formula \ref{eq:0-differential}, the $0$-differential
$^{0}dv$ is
\[
^{0}dv=A^{-1}a\left[e^{0}\otimes E_{0}-\sum_{i=1}^{n}e^{0}\otimes\beta^{i}E_{i}+\sum_{i=1}^{\min\left(m,n\right)}\Lambda_{i}^{i}e^{i}\otimes E_{i}\right].
\]
Given the previous formulas, it is now elementary to compute the tension
field $\tau\left(v\right)=\tr_{g}\nabla\left(^{0}dv\right)$. We omit
the simple algebraic passages, and we report the final result:
\begin{equation}
\tau\left(v\right)=A^{-1}a^{2}\left[\left(m-\boldsymbol{\beta}^{T}\boldsymbol{\beta}-\tr\left(\boldsymbol{\Lambda}^{T}\boldsymbol{\Lambda}\right)\right)E_{0}-\left(m+1\right)\sum_{i=1}^{n}\beta^{i}E_{i}\right].\label{eq:tension}
\end{equation}
From this expression, we obtain the following
\begin{prop}
\label{prop:model-harmonic-simple-bmap}The model simple $b$-map
$v$ is harmonic if and only if $\boldsymbol{\beta}=0$ and $\tr\left(\boldsymbol{\Lambda}^{T}\boldsymbol{\Lambda}\right)=m$.
\end{prop}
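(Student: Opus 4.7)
The proof is essentially already contained in equation~\eqref{eq:tension}, which was derived just before the statement, so my plan is to read it off directly rather than compute anything new. The key observation is that $E_0, E_1, \ldots, E_n$ form a pointwise orthonormal frame for $^{0}T\text{H}^{n+1}(A)$, so the section $\tau(v)$ of $v^*{^{0}T\text{H}^{n+1}(A)}$ vanishes identically if and only if each component with respect to this frame vanishes identically. Thus I would start by writing
\[
\tau(v) = A^{-1}a^{2}\Bigl[\bigl(m - \boldsymbol{\beta}^{T}\boldsymbol{\beta} - \tr(\boldsymbol{\Lambda}^{T}\boldsymbol{\Lambda})\bigr) E_{0} - (m+1)\sum_{i=1}^{n} \beta^{i} E_{i}\Bigr],
\]
and observe that $A^{-1}a^{2}\neq 0$.

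Next, I would separate the vanishing condition into the $E_0$-component and the $E_i$-components for $i\geq 1$. The $E_i$-components yield $(m+1)\beta^i = 0$ for $i=1,\dots,n$; since $m\geq 0$ we have $m+1\geq 1\neq 0$, so this forces $\boldsymbol{\beta} = 0$. Substituting $\boldsymbol{\beta}=0$ into the $E_0$-component then gives the equation $m - \tr(\boldsymbol{\Lambda}^{T}\boldsymbol{\Lambda}) = 0$, i.e.\ $\tr(\boldsymbol{\Lambda}^{T}\boldsymbol{\Lambda}) = m$. Conversely, if $\boldsymbol{\beta}=0$ and $\tr(\boldsymbol{\Lambda}^{T}\boldsymbol{\Lambda}) = m$, both the $E_0$-component and all $E_i$-components of $\tau(v)$ vanish, so $v$ is harmonic.

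There is no real obstacle here; the work has already been done in deriving \eqref{eq:tension}. The only thing to be careful about is that the orthonormality of the frame $\{E_0,\dots,E_n\}$ is what lets us conclude ``coefficient-by-coefficient'' vanishing, and that the scalar $(m+1)$ (which arises from the $m$ copies of $\Lambda$-type terms together with the single $\beta$ contribution along $e^0$) never vanishes in the allowed range $m\geq 0$. The whole argument can be written in a few lines.
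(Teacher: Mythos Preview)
Your proposal is correct and matches the paper's approach exactly: the paper simply states that the proposition follows ``from this expression'' (namely \eqref{eq:tension}) without writing out a separate proof, and your argument just makes explicit the coefficient-by-coefficient reading in the orthonormal frame $E_0,\dots,E_n$ that the paper leaves implicit.
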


\subsection{The model Jacobi operator and its indicial roots}

The model Jacobi operator is
\[
J_{v}=\Delta+\tr_{g}R_{h}\left(^{0}dv,\cdot\right){^{0}dv},
\]
where $\Delta=\left(v^{*}\nabla''\right)^{*}v^{*}\nabla''$ is the
rough Laplacian. The associated indicial operator is
\[
I\left(J_{v}\right)=I\left(v^{*}\nabla''\right)^{*}I\left(v^{*}\nabla''\right)+\tr_{g}R_{h}\left(^{0}dv,\cdot\right){^{0}dv}.
\]

Let's first compute $I\left(v^{*}\nabla''\right)$. We already computed
the connection $\nabla''$ and its pull-back via $v$: it follows
that
\begin{align*}
I\left(v^{*}\nabla''\right) & =\left(\frac{dx}{x}\otimes E_{0}\otimes E^{0}\right)x\partial_{x}\\
 & +\sum_{i=1}^{n}\left(\frac{dx}{x}\otimes E_{i}\otimes E^{i}\right)x\partial_{x}\\
 & +\sum_{i=1}^{n}\beta^{i}\frac{dx}{x}\otimes\left(E_{i}\otimes E^{0}-E_{0}\otimes E^{i}\right)\\
 & +\sum_{i=1}^{\min\left(m,n\right)}\Lambda_{i}^{i}\frac{dy^{i}}{x}\otimes\left(E_{i}\otimes E^{0}-E_{0}\otimes E^{i}\right).
\end{align*}
The formal adjoint of $I\left(u^{*}\nabla\right)$ is now straightforward
to compute, observing that the divergence of $x\partial_{x}$ is $-m$
and therefore $\left(x\partial_{x}\right)^{*}=-x\partial_{x}+m$.
This leads to an explicit description of $I\left(u^{*}\nabla\right)^{*}$,
and finally to a description of $I\left(\Delta\right)$. We omit the
elementary computations, and report the final result in block-matrix
form with respect to the frame $E_{0},...,E_{n}$: writing
\[
\sigma^{2}=\boldsymbol{\beta}^{T}\boldsymbol{\beta}+\tr\left(\boldsymbol{\Lambda}^{T}\boldsymbol{\Lambda}\right),
\]
we have
\[
I\left(\Delta\right)=a^{2}\left[-\boldsymbol{I}_{n+1}\left(x\partial_{x}\right)^{2}+\left(\begin{matrix}m & -2\boldsymbol{\beta}^{T}\\
2\boldsymbol{\beta} & m\boldsymbol{I}_{n}
\end{matrix}\right)x\partial_{x}+\left(\begin{matrix}\sigma^{2} & m\boldsymbol{\beta}^{T}\\
-m\boldsymbol{\beta} & \boldsymbol{\beta}\boldsymbol{\beta}^{T}+\boldsymbol{\Lambda}\boldsymbol{\Lambda}^{T}
\end{matrix}\right)\right].
\]

Let's now compute the curvature term $\tr_{g}R_{h}\left(^{0}dv,\cdot\right){^{0}dv}$.
Directly from the expression of the Levi-Civita $0$-connection $\nabla$
of $h$, we get
\begin{align*}
R_{h} & =A^{2}\left[\sum_{i=1}^{n}E^{0i}\otimes\left(E_{i}\otimes E^{0}-E_{0}\otimes E^{i}\right)-\sum_{\begin{smallmatrix}i,j=1\\
i\not=j
\end{smallmatrix}}^{n}E^{ij}\otimes\left(E_{i}\otimes E^{j}\right)\right].
\end{align*}
Now, we already computed $^{0}dv$ above, so it is easy to compute
$\tr_{g}R_{h}\left(^{0}dv,\cdot\right){}^{0}dv$. Omitting the various
elementary steps, the final result is
\[
\tr_{g}R_{h}\left(^{0}dv,\cdot\right){}^{0}dv=a^{2}\left[\sigma^{2}\boldsymbol{I}_{n+1}+\left(\begin{matrix}0 & \boldsymbol{\beta}^{T}\\
\boldsymbol{\beta} & \boldsymbol{I}_{n}
\end{matrix}\right)-\left(\begin{matrix}0 & \boldsymbol{0}\\
\boldsymbol{0} & \boldsymbol{\beta}\boldsymbol{\beta}^{T}+\boldsymbol{\Lambda}\boldsymbol{\Lambda}^{T}
\end{matrix}\right)\right].
\]
Adding this term to $I\left(\Delta\right)$, we finally obtain the
indicial operator of $J_{v}$:
\[
I\left(J_{v}\right)=a^{2}\left[-\boldsymbol{I}_{n+1}\left(x\partial_{x}\right)^{2}+\left(\begin{matrix}m & -2\boldsymbol{\beta}^{T}\\
2\boldsymbol{\beta} & m\boldsymbol{I}_{n}
\end{matrix}\right)\left(x\partial_{x}\right)+\left(\begin{matrix}2\sigma^{2} & \left(m+1\right)\boldsymbol{\beta}^{T}\\
\left(-m+1\right)\boldsymbol{\beta} & \left(1+\sigma^{2}\right)\boldsymbol{I}_{n}
\end{matrix}\right)\right].
\]
The indicial roots of $J_{v}$ are therefore the values of $z$ for
which the matrix
\[
-\boldsymbol{I}_{n+1}z^{2}+\left(\begin{matrix}m & -2\boldsymbol{\beta}^{T}\\
2\boldsymbol{\beta} & m\cdot\boldsymbol{I}_{n}
\end{matrix}\right)z+\left(\begin{matrix}2\sigma^{2} & \left(m+1\right)\boldsymbol{\beta}^{T}\\
\left(-m+1\right)\boldsymbol{\beta} & \left(1+\sigma^{2}\right)\boldsymbol{I}_{n}
\end{matrix}\right)
\]
is singular.
\begin{lem}
The model Jacobi operator $J_{v}$ has $0$ as an indicial root if
and only if $\boldsymbol{\beta}=0$ and $\boldsymbol{\Lambda}=0$.
\end{lem}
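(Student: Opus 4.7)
The plan is to compute the determinant of the indicial matrix of $J_v$ evaluated at $z=0$ explicitly, and exhibit it as a manifestly non-negative expression which vanishes precisely under the claimed conditions.

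First, setting $z = 0$ in the block expression for $I(J_v)$ kills the quadratic and linear terms, leaving
\[
I(J_v)(0) = a^2\begin{pmatrix}2\sigma^2 & (m+1)\boldsymbol{\beta}^T\\ (1-m)\boldsymbol{\beta} & (1+\sigma^2)\boldsymbol{I}_n\end{pmatrix}.
\]
Since $\sigma^2 = |\boldsymbol{\beta}|^2 + \tr(\boldsymbol{\Lambda}^T\boldsymbol{\Lambda}) \geq 0$, the $(2,2)$-block $(1+\sigma^2)\boldsymbol{I}_n$ is always invertible. I would therefore apply the Schur complement with respect to this block. After collecting powers of $(1+\sigma^2)$, this yields
\[
\det I(J_v)(0) = a^{2(n+1)}(1+\sigma^2)^{n-1}\bigl[2\sigma^2(1+\sigma^2) + (m^2-1)|\boldsymbol{\beta}|^2\bigr].
\]

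Second, writing $b := |\boldsymbol{\beta}|^2$ and $L := \tr(\boldsymbol{\Lambda}^T\boldsymbol{\Lambda})$, so $\sigma^2 = b+L$ with $b,L \geq 0$, I would expand and regroup the bracket as
\[
2(b+L)(1+b+L) + (m^2-1)b = 2L + (m^2+1)b + 2(b+L)^2.
\]
Every summand on the right is non-negative, and the coefficient $m^2+1$ is strictly positive for all $m$, so the bracket vanishes if and only if $b = 0$ and $L = 0$, equivalently $\boldsymbol{\beta} = 0$ and $\boldsymbol{\Lambda} = 0$.

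The converse direction is visible at a glance: when $\boldsymbol{\beta} = 0$ and $\boldsymbol{\Lambda} = 0$, one has $\sigma^2 = 0$ and $I(J_v)(0)$ becomes the block-diagonal matrix $a^2\,\mathrm{diag}(0,\boldsymbol{I}_n)$, whose kernel contains $E_0$. There is no substantive obstacle here: the whole content of the lemma is the Schur-complement determinant formula combined with the elementary non-negativity observation, and the key algebraic feature being exploited is simply that the coefficients $(m+1)$ and $(1-m)$ conspire to produce the factor $(m^2-1)$, which combines with $2\sigma^2(1+\sigma^2)$ to give a sum of squares-plus-non-negative-terms.
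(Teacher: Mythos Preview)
Your proposal is correct and follows essentially the same approach as the paper: compute the determinant of $I(J_v)(0)$ via the Schur complement with respect to the invertible $(1+\sigma^2)\boldsymbol{I}_n$ block, then rewrite the resulting scalar as a sum of manifestly non-negative terms that vanish precisely when $\boldsymbol{\beta}=0$ and $\boldsymbol{\Lambda}=0$. The paper's final regrouping $\sigma^{2}+2\sigma^{4}+\tr(\boldsymbol{\Lambda}^{T}\boldsymbol{\Lambda})+m^{2}\boldsymbol{\beta}^{T}\boldsymbol{\beta}$ is algebraically identical to your $2L+(m^2+1)b+2(b+L)^2$.
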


\begin{proof}
$0$ is an indicial root of $J_{v}$ if and only if the determinant
\[
\det\left(\begin{matrix}2\sigma^{2} & \left(m+1\right)\boldsymbol{\beta}^{T}\\
\left(-m+1\right)\boldsymbol{\beta} & \left(1+\sigma^{2}\right)\boldsymbol{I}_{n}
\end{matrix}\right)=\left(2\sigma^{2}+\frac{\left(m+1\right)\left(m-1\right)\boldsymbol{\beta}^{T}\boldsymbol{\beta}}{1+\sigma^{2}}\right)\left(1+\sigma^{2}\right)^{n}
\]
is zero. Canceling the $1+\sigma^{2}$ factors and substituting $\sigma^{2}=\boldsymbol{\beta}^{T}\boldsymbol{\beta}+\tr\left(\boldsymbol{\Lambda}^{T}\boldsymbol{\Lambda}\right)$,
this condition is equivalent to the equation
\[
\sigma^{2}+2\sigma^{4}+\tr\left(\boldsymbol{\Lambda}^{T}\boldsymbol{\Lambda}\right)+m^{2}\boldsymbol{\beta}^{T}\boldsymbol{\beta}=0,
\]
which is equivalent to the vanishing of $\boldsymbol{\beta}$ and
$\boldsymbol{\Lambda}$.
\end{proof}
From this lemma, we obtain immediately the model version of our main
theorem:
\begin{prop}
\label{prop:model-biharmonic-->harmonic}Let $v$ be biharmonic. If
$m\geq1,$ assume that $\boldsymbol{\Lambda}\not=0$. Then $v$ is
harmonic, and consequently $\boldsymbol{\beta}=0$ and $\tr\left(\boldsymbol{\Lambda}^{T}\boldsymbol{\Lambda}\right)=m$
by Proposition \ref{prop:model-harmonic-simple-bmap}.
\end{prop}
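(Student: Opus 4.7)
The plan is to exploit the fact that $\tau(v)$, as given by Formula~(\ref{eq:tension}), is a section of $v^{*}({}^{0}TN)$ with \emph{constant} coefficients in the pulled-back frame $E_0,\dots,E_n$, namely the scalars $(m-\sigma^{2})$ and $-(m+1)\beta^{i}$. The first step is to observe that $J_v$ preserves the finite-dimensional subspace of such constant-coefficient sections and acts on it by the constant matrix $I(J_v)\big|_{z=0}$ written out explicitly just above the preceding lemma. This should be justified either by invoking the dilation and translation invariance of the whole model (the map $v$, the metrics $g_p,h_q$, and the frame $\{E_\mu\}$ are all equivariant under $(x,\boldsymbol{y})\mapsto(\lambda x,\lambda \boldsymbol{y})$ and $\boldsymbol{y}\mapsto \boldsymbol{y}+\boldsymbol{c}$), or directly: every term of $J_v$ beyond its zeroth-order part carries at least one factor of $x\partial_x$ or $x\partial_{y^i}$, and such factors annihilate any section with constant coefficients in $\{E_\mu\}$.

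With this reduction in hand, the biharmonic equation $J_v\tau(v)=0$ collapses to the finite-dimensional linear condition
\[
I(J_v)\big|_{z=0}\cdot \begin{pmatrix} m-\sigma^{2}\\ -(m+1)\boldsymbol{\beta}\end{pmatrix}=\mathbf{0}.
\]
If either $\boldsymbol{\beta}\neq \mathbf{0}$ or $\boldsymbol{\Lambda}\neq \mathbf{0}$, the preceding lemma tells us that $0$ is not an indicial root of $J_v$, so the matrix $I(J_v)\big|_{z=0}$ is invertible, forcing $\tau(v)=\mathbf{0}$. When $m\geq 1$ this is ensured by the standing hypothesis $\boldsymbol{\Lambda}\neq\mathbf{0}$, so $v$ is harmonic and Proposition~\ref{prop:model-harmonic-simple-bmap} delivers the stated consequences $\boldsymbol{\beta}=\mathbf{0}$ and $\tr(\boldsymbol{\Lambda}^{T}\boldsymbol{\Lambda})=m$. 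The case $m=0$ would be handled in parallel: the matrix $\boldsymbol{\Lambda}$ is vacuous, and either $\boldsymbol{\beta}=\mathbf{0}$ (whence $\tau(v)=\mathbf{0}$ is immediate from~(\ref{eq:tension})) or $\boldsymbol{\beta}\neq \mathbf{0}$ (whence the lemma again gives $\tau(v)=\mathbf{0}$, and Proposition~\ref{prop:model-harmonic-simple-bmap} then forces $\boldsymbol{\beta}=\mathbf{0}$, a contradiction). If one preferred to avoid invoking the lemma, the same conclusion could be reached by directly multiplying the block matrix against the vector: the second block yields $\boldsymbol{\beta}(2\sigma^{2}+m^{2}+1)=\mathbf{0}$, hence $\boldsymbol{\beta}=\mathbf{0}$, and the first block then reduces to $\sigma^{2}(m-\sigma^{2})=0$, which together with the hypothesis $\boldsymbol{\Lambda}\neq\mathbf{0}$ rules out $\sigma^{2}=0$ and leaves $\sigma^{2}=m$.

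The only genuinely delicate step is the first one, namely justifying that $J_v$ really does act on the constant section $\tau(v)$ as multiplication by $I(J_v)\big|_{z=0}$. The invariance argument is conceptually clean, but it does require verifying that the pulled-back connection $v^{*}\nabla''$ has constant matrix entries in the frame $\{E_\mu\}$ and that the endomorphism $\tr_{g}R_{h}({}^{0}dv,\cdot){}^{0}dv$ is likewise a constant endomorphism of $v^{*}({}^{0}TN)$ in that frame. Both facts are already visible in the explicit expressions for $\nabla''$, $R_h$ and ${}^{0}dv$ worked out in the previous subsection, so this is bookkeeping rather than a new estimate; once past it, what remains is purely the linear algebra of a small block matrix together with the indicial-root lemma already at our disposal.
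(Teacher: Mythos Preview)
Your proposal is correct and follows essentially the same approach as the paper. Both arguments reduce the biharmonic equation for the model map to the linear condition that $I(J_v)(0)$ annihilates the (nonzero) constant vector $\tau(v)$, and then invoke the preceding lemma to conclude that $0$ being an indicial root forces $\boldsymbol{\beta}=0$ and $\boldsymbol{\Lambda}=0$, contradicting the hypotheses. The only difference is cosmetic: the paper phrases the key reduction via the general identity $I_p(L)(0)\cdot u_p=(Lu)_p$ for any smooth section $u$, whereas you phrase it as ``$J_v$ acts on constant sections by the matrix $I(J_v)|_{z=0}$,'' justified through the dilation/translation invariance of the model; these are two ways of saying the same thing. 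Your optional direct block-matrix computation (yielding $\boldsymbol{\beta}(2\sigma^2+m^2+1)=\mathbf{0}$ and then $2\sigma^2(m-\sigma^2)=0$) is a pleasant self-contained alternative that the paper does not include.
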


\begin{proof}
In general, for a $0$-differential operator $L\in\Diff_{0}^{\bullet}\left(M;E,F\right)$
between sections of vector bundles $E,F\to M$, the indicial polynomial
$I_{p}\left(L\right)\left(z\right)$ at $p\in\partial M$ can also
be expressed as the polynomial $z\mapsto\left(x^{-z}Lx^{z}\right)_{p}$
with coefficients in the space of linear maps $E_{p}\to F_{p}$, where
$x$ is a boundary defining function for $M$. This implies that $I_{p}\left(L\right)\left(0\right)$
has non-trivial kernel if and only if there exists a section $u$
of $E$, with $u_{p}\not=0$, such that $\left(Lu\right)_{p}=0$.
We now apply this observation to our problem. Assume first $m=0$,
so that 
\[
v\left(x\right)=\left(x,\beta^{1}x,...,\beta^{n}x\right),\tau\left(v\right)=A^{-1}a^{2}\left[-\boldsymbol{\beta}^{T}\boldsymbol{\beta}E_{0}-\sum_{i=1}^{n}\beta^{i}E_{i}\right].
\]
Assume that $v$ is biharmonic, and suppose for the sake of a contradiction
that $v$ is not harmonic. Then $\tau\left(v\right)\not=0$, and $J_{v}\tau\left(v\right)=0$.
However, this implies that $0$ is an indicial root of $J_{v}$, and
therefore $\boldsymbol{\beta}=0$ by the previous lemma, contradicting
$\tau\left(v\right)\not=0$. Now, consider the case $m\geq1$ and
$\boldsymbol{\Lambda}\not=0$. Again, if $v$ is biharmonic not harmonic,
$0$ must be an indicial root of $J_{v}$, which implies that $\boldsymbol{\beta}=0$
and $\boldsymbol{\Lambda}=0$, contradicting the hypothesis $\boldsymbol{\Lambda}\not=0$.
\end{proof}
From these computations, we also obtain an example of biharmonic,
not harmonic, model simple $b$-map.
\begin{prop}
\label{prop:example-proper-biharmonic}Let $m\geq1$. Then the model
simple $b$-map
\begin{align*}
v:\text{H}^{m+1}\left(a\right) & \to\text{H}^{n+1}\left(A\right)\\
\left(x,\boldsymbol{y}\right) & \mapsto\left(x,\boldsymbol{0}\right)
\end{align*}
is biharmonic but not harmonic.
\end{prop}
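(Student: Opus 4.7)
The plan is to verify the proposition by direct substitution into the formulas derived earlier in this section. For $v(x, \boldsymbol{y}) = (x, \boldsymbol{0})$, reading off the data from the definition gives $\gamma = 1$, $\boldsymbol{\beta} = 0$ and $\boldsymbol{\Lambda} = 0$, and hence $\sigma^{2} = 0$. Substituting these values into the tension field formula (\ref{eq:tension}) yields $\tau(v) = A^{-1} a^{2} m\, E_{0}$, which is non-zero whenever $m \geq 1$; this shows that $v$ is not harmonic.

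It remains to check that $J_{v}\tau(v) = 0$. The convenient observation is that $\tau(v)$, expressed in the pulled-back orthonormal frame $v^{*}E_{0}, \ldots, v^{*}E_{n}$, has constant coefficients. Specializing the formulas for the $v^{*}E^{i}$ computed earlier to $\boldsymbol{\beta} = \boldsymbol{\Lambda} = 0$ shows that $v^{*}E^{i} = 0$ for every $i \geq 1$, so the pulled-back connection $v^{*}\nabla''$ annihilates each $v^{*}E_{j}$; in particular, the rough Laplacian $\Delta = (v^{*}\nabla'')^{*}\,v^{*}\nabla''$ applied to a constant multiple of $v^{*}E_{0}$ vanishes. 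For the curvature contribution, substituting $\boldsymbol{\beta} = \boldsymbol{\Lambda} = 0$ and $\sigma^{2} = 0$ into the matrix expression previously derived for $\tr_{g} R_{h}({^{0}dv}, \cdot){^{0}dv}$ yields $a^{2}\,\mathrm{diag}(0, \boldsymbol{I}_{n})$, which kills the $E_{0}$ component. Summing the two contributions gives $J_{v}(v^{*}E_{0}) = 0$, and since $\tau(v)$ is a scalar multiple of $v^{*}E_{0}$, we conclude $J_{v}\tau(v) = 0$.

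There is no real obstacle in this argument; it is a direct specialization of formulas already established in the section, and the role of the hypothesis $m \geq 1$ is simply to ensure the $m$-factor in $\tau(v)$ does not vanish. The conceptual interest of the proposition lies elsewhere: it shows that the non-degeneracy hypothesis on $\boldsymbol{\Lambda}_{u}$ cannot be dropped from Proposition \ref{prop:model-biharmonic-->harmonic} (and, correspondingly, from the main theorem), since the collapsing maps $(x, \boldsymbol{y}) \mapsto (x, \boldsymbol{0})$ provide explicit proper biharmonic examples between rescaled hyperbolic half-spaces.
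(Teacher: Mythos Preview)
Your proof is correct and follows essentially the same approach as the paper: both substitute $\boldsymbol{\beta}=0$, $\boldsymbol{\Lambda}=0$ into the formulas derived in this section to obtain $\tau(v)$ and then verify $J_{v}\tau(v)=0$. The only cosmetic difference is that the paper appeals directly to the indicial polynomial $I(J_{v})(0)=\mathrm{diag}(0,\boldsymbol{I}_{n})$ acting on the constant section $\tau(v)$, whereas you split the computation into the rough-Laplacian part (vanishing because the pulled-back connection is trivial in the frame $v^{*}E_{j}$) and the curvature part.
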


\begin{proof}
This case corresponds to $\boldsymbol{\beta}=0$ and $\boldsymbol{\Lambda}=0$.
By Formula \ref{eq:tension}, the tension field is
\[
\tau\left(v\right)=-a^{2}mX\partial_{X},
\]
so since $m\geq1$, $v$ is not harmonic (for $m=0$, $v$ is in fact
totally geodesic). However, $J_{v}$ has an indicial root $0$. More
precisely, the indicial polynomial of $J_{v}$ evaluated at $z=0$
is
\[
\left(\begin{matrix}0 & \boldsymbol{0}\\
\boldsymbol{0} & \boldsymbol{I}_{n}
\end{matrix}\right).
\]
This fact, together with the fact that $\tau\left(v\right)$ is a
constant multiple of $X\partial_{X}$, implies that $J_{v}\left(\tau\left(v\right)\right)=0$,
so $v$ is biharmonic as claimed.
\end{proof}
Assuming that $v$ is harmonic, it is now very easy to compute all
the indicial roots:
\begin{prop}
\label{prop:ind-roots}Let $v$ be harmonic. Then the indicial roots
of $J_{v}$ are real and come in two pairs $\alpha_{\pm},\beta_{\pm}$
symmetric about $m/2$:
\begin{align*}
\alpha_{\pm} & =\frac{m}{2}\pm\frac{\sqrt{m^{2}+8m}}{2}\\
\beta_{-} & =-1\\
\beta_{+} & =m+1.
\end{align*}
In particular, if $m\geq1$, there are no indicial roots in the interval
$\left(-1,m+1\right)$.
\end{prop}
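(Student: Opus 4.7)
The plan is to substitute the harmonicity conditions supplied by Proposition \ref{prop:model-harmonic-simple-bmap}, namely $\boldsymbol{\beta}=0$ and $\sigma^{2}=\tr(\boldsymbol{\Lambda}^{T}\boldsymbol{\Lambda})=m$, into the explicit formula for the indicial polynomial of $J_{v}$ just derived, and then simply read off the roots from the two resulting scalar quadratic equations.

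Setting $\boldsymbol{\beta}=0$ kills every off-diagonal block in the two constant matrices constituting $I(J_{v})$, and with $\sigma^{2}=m$ the indicial polynomial collapses to the block-diagonal form
\[
I(J_{v})(z) \;=\; a^{2}\begin{pmatrix} -z^{2}+mz+2m & 0 \\ 0 & \bigl(-z^{2}+mz+(m+1)\bigr)\,\boldsymbol{I}_{n}\end{pmatrix}.
\]
The indicial roots are therefore the union of the roots of the two scalar polynomials on the diagonal. The first polynomial factors as $-(z-\alpha_{+})(z-\alpha_{-})$ with $\alpha_{\pm}=(m\pm\sqrt{m^{2}+8m})/2$; the second factors cleanly as $-(z-(m+1))(z+1)$, yielding $\beta_{+}=m+1$ and $\beta_{-}=-1$. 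Symmetry about $m/2$ is automatic, since both quadratics have $-m$ as their linear coefficient.

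For the last assertion it suffices to check, when $m\geq 1$, that $\alpha_{+}\geq m+1$ and, by symmetry, $\alpha_{-}\leq -1$, so that all four roots lie outside the open interval $(-1,m+1)$. This reduces after squaring to the trivial inequality $m^{2}+8m\geq (m+2)^{2}$, i.e.\ $4m\geq 4$. There is essentially no obstacle to this proposition: the computation is pure algebra, the harmonicity assumption being precisely what produces the block-diagonalization. What makes the statement worth isolating is the conclusion that the interval $(-1,m+1)$ is wide enough to contain the critical weight $m/2$ with positive room on either side, which is exactly the hypothesis needed to apply Mazzeo's Theorem \ref{thm:(Mazzeo)} in the subsequent analysis.
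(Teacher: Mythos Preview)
Your proof is correct and follows essentially the same route as the paper: substitute the harmonicity conditions $\boldsymbol{\beta}=0$, $\sigma^{2}=m$ into the indicial polynomial, read off the block-diagonal form, and solve the two resulting quadratics. You even add an explicit verification of the final interval claim (via $m^{2}+8m\geq (m+2)^{2}$), which the paper leaves implicit.
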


\begin{proof}
Since $v$ is harmonic, we have $\boldsymbol{\beta}=0$ and $\tr\left(\boldsymbol{\Lambda}^{T}\boldsymbol{\Lambda}\right)=m$.
The indicial polynomial in this case simplifies to
\[
\left(\begin{matrix}-z^{2}+mz+2m\\
 & -z^{2}+mz+\left(1+m\right)\boldsymbol{I}_{n}
\end{matrix}\right).
\]
It follows that the indicial roots are the solution $\alpha_{\pm}$
of $-z^{2}+mz+2m=0$, and $\beta_{\pm}=-z^{2}+mz+\left(1+m\right)=0$,
which are reported above.
\end{proof}

\section{Proof of the main theorem}

Let's come back to the global picture.
\begin{prop}
Assume $m\geq1$. Let $u:M\to N$ be a biharmonic simple $b$-map,
and assume that $u$ is not constant on any connected component of
$\partial M$. Then $u$ is asymptotically harmonic, i.e. the tension
field $\tau\left(u\right)\in C^{\infty}\left(M;u^{*}{^{0}TN}\right)$
vanishes along $\partial M$; equivalently, $\boldsymbol{\beta}_{u}=0$
and $\tr\left(\boldsymbol{\Lambda}_{u}^{*}\boldsymbol{\Lambda}_{u}\right)\equiv m$.
Moreover, the indicial roots of the Jacobi operator $J_{u}$ are $\alpha_{\pm},\beta_{\pm}$
as in Proposition \ref{prop:ind-roots}.
\end{prop}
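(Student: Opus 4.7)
The plan is to globalise the model-case argument of Proposition \ref{prop:model-biharmonic-->harmonic} pointwise along $\partial M$, and then use a clopen decomposition of the boundary (together with the non-constancy hypothesis) to eliminate the ``bad'' alternative on each connected component. The starting observation is the following general identity: if $L\in\Diff_{0}^{k}(M;E)$ has polyhomogeneous coefficients and $s$ is a bounded polyhomogeneous section of $E$, then for every $p\in\partial M$
\[
(Ls)|_{p}\;=\;I_{p}(L)(0)\cdot s|_{p}.
\]
Indeed, writing $L$ in local half-space coordinates $(x,\boldsymbol{y})$ centred at $p$, every monomial $(x\partial_{x})^{j}(x\partial_{\boldsymbol{y}})^{\alpha}$ with $j+|\alpha|\geq1$ carries an overall factor of $x$ which kills its action on bounded polyhomogeneous sections at $x=0$, so only the $(0,0)$-coefficient $L_{0,0}(0,\boldsymbol{0})=I_{p}(L)(0)$ survives. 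Applying this to $L=J_{u}$ and $s=\tau(u)$, the biharmonic equation $J_{u}\tau(u)=0$ yields
\[
I_{p}(J_{u})(0)\cdot\tau(u)|_{p}\;=\;0\qquad\text{for every }p\in\partial M.
\]

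I would then translate this identity into a rigid dichotomy at each boundary point. By Proposition \ref{prop:characterization-normal-operator}, the indicial polynomial of $J_{u}$ at $p$ coincides with that of the model Jacobi operator $J_{u_{p}}$; moreover, by the same freezing-of-coefficients principle applied to sections, the boundary value $\tau(u)|_{p}$ equals the (constant) tension field of the model map $u_{p}$, which by formula (\ref{eq:tension}) is explicit in terms of $\boldsymbol{\beta}_{u}|_{p}$ and $\boldsymbol{\Lambda}_{u}|_{p}$. The lemma preceding Proposition \ref{prop:model-biharmonic-->harmonic} then tells me that $I_{p}(J_{u})(0)$ is singular if and only if $\boldsymbol{\beta}_{u}|_{p}=0$ and $\boldsymbol{\Lambda}_{u}|_{p}=0$. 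Hence at each $p\in\partial M$, one of two alternatives holds: either \textbf{(A)} $\tau(u)|_{p}=0$, equivalently $\boldsymbol{\beta}_{u}|_{p}=0$ and $\tr(\boldsymbol{\Lambda}_{u}^{*}\boldsymbol{\Lambda}_{u})|_{p}=m$; or \textbf{(B)} $\boldsymbol{\beta}_{u}|_{p}=0$ and $\boldsymbol{\Lambda}_{u}|_{p}=0$. Crucially, since $m\geq1$, formula (\ref{eq:tension}) evaluated in case (B) gives $\tau(u)|_{p}=A^{-1}a^{2}m\,E_{0}\neq0$, so the two alternatives are mutually exclusive.

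Next I would set $U=\{p\in\partial M:\tau(u)|_{p}=0\}$ and $V=\{p\in\partial M:\boldsymbol{\beta}_{u}|_{p}=0\text{ and }\boldsymbol{\Lambda}_{u}|_{p}=0\}$. Both are zero loci of continuous sections, hence closed in $\partial M$, and by the dichotomy they are disjoint and cover $\partial M$; so each is clopen. On any connected component $C$ of $\partial M$, one of them contains $C$ entirely. If $V\supseteq C$, then $\boldsymbol{\Lambda}_{u}\equiv 0$ on $C$; but by formula (\ref{eq:0-differential}), $\boldsymbol{\Lambda}_{u}$ encodes the tangential differential of $u_{|\partial M}$, so its vanishing on $C$ would force $u$ to be constant on $C$, contradicting the hypothesis. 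Hence $U\supseteq C$ on every component, which is exactly the asymptotic harmonicity of $u$. Having established this, the model map $u_{p}$ is harmonic for every $p\in\partial M$; Proposition \ref{prop:characterization-normal-operator} identifies the indicial polynomial of $J_{u}$ at $p$ with that of the Jacobi operator of a harmonic model map, and Proposition \ref{prop:ind-roots} then gives the indicial roots $\alpha_{\pm},\beta_{\pm}$, independent of $p$.

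The main obstacle is the rigidity of the separation between cases (A) and (B): a priori, each could hold on some subset of a connected component, and one might worry about overlap. What makes the clopen argument succeed is that (B) not only makes $I_{p}(J_{u})(0)$ singular but \emph{forces} $\tau(u)|_{p}$ to equal the explicit non-zero vector $A^{-1}a^{2}m\,E_{0}$ --- which is precisely where the hypothesis $m\geq1$ enters. This rigidity upgrades the pointwise dichotomy to a partition of $\partial M$ by two disjoint closed sets, and the non-constancy hypothesis on each component then rules out (B) globally.
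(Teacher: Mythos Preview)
Your proof is correct and follows essentially the same route as the paper: both reduce the biharmonic equation at each boundary point to the model identity $I_{p}(J_{u})(0)\cdot\tau(u)|_{p}=0$, invoke the lemma characterising when $0$ is an indicial root to obtain the pointwise dichotomy, and then use a continuity/connectedness argument (your clopen sets $U,V$, the paper's observation that $\tr(\boldsymbol{\Lambda}_{u}^{*}\boldsymbol{\Lambda}_{u})$ takes values in $\{0,m\}$) together with the non-constancy hypothesis to rule out the degenerate alternative on each component. One minor remark: your justification that ``every monomial $(x\partial_{x})^{j}(x\partial_{\boldsymbol{y}})^{\alpha}$ with $j+|\alpha|\geq1$ carries an overall factor of $x$'' is slightly imprecise for pure $(x\partial_{x})^{j}$ terms---the point there is rather that $(x\partial_{x})$ annihilates the leading term of a bounded polyhomogeneous section with no leading logarithm, which is exactly the standing assumption in the paper.
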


\begin{proof}
Since $u$ is not constant on any connected component of $\partial M$,
for every connected component $C$ of $\partial M$ there is a $p\in C$
such that the differential $d\left(u_{|\partial M}\right)_{p}$ is
non-zero. This condition is equivalent to $\boldsymbol{\Lambda}_{u|p}\not=0$,
and therefore $\tr\left(\boldsymbol{\Lambda}_{u}^{*}\boldsymbol{\Lambda}_{u}\right)\left(p\right)\not=0$.
Call now $q=u\left(p\right)$. Writing the equation $J_{u}\left(\tau\left(u\right)\right)=0$
in normal half-space coordinates centered at $p$ and $q$, we obtain
that we must have $J_{u_{p}}\left(\tau\left(u_{p}\right)\right)=0$
for the model simple $b$-map $u_{p}:M_{p}\to N_{q}$. By Proposition
\ref{prop:model-biharmonic-->harmonic}, the fact that $\boldsymbol{\Lambda}_{u|p}\not=0$
implies that $u_{p}$ is actually harmonic, i.e. $\tau\left(u_{p}\right)=\tau\left(u\right)_{p}=0$.
Moreover, again by Proposition \ref{prop:model-biharmonic-->harmonic},
we have $\boldsymbol{\beta}_{u|p}=0$ and $\tr\left(\boldsymbol{\Lambda}_{u}^{*}\boldsymbol{\Lambda}_{u}\right)\left(p\right)=m$.
This argument shows that the function $\tr\left(\boldsymbol{\Lambda}_{u}^{*}\boldsymbol{\Lambda}_{u}\right)\in C^{\infty}\left(\partial M\right)$
takes values in the discrete set $\left\{ 0,m\right\} $. Therefore,
since $\boldsymbol{\Lambda}_{u}$ is not identically zero on every
connected component of $\partial M$, $\tr\left(\boldsymbol{\Lambda}_{u}^{*}\boldsymbol{\Lambda}_{u}\right)$
must be equal to $m$ on the whole of $\partial M$, and $\boldsymbol{\beta}_{u}$
must vanish everywhere on $\partial M$. The statement about the indicial
roots follows from the fact that the indicial roots of $J_{u}$ at
$p$ coincide with the indicial roots of $J_{u_{p}}$.
\end{proof}
We can finally prove our main theorem.
\begin{thm}
Let $\left(M^{m+1},g\right)$ and $\left(N^{n+1},h\right)$ be conformally
compact manifolds, with $\left(N,h\right)$ of non-positive sectional
curvature. Let $u:M\to N$ be a simple $b$-map. If $m\geq1$, assume
that $u_{|\partial M}$ is not constant on any connected component
of $\partial M$. Then $u$ is biharmonic if and only if it is harmonic.
\end{thm}

\begin{proof}
If $m=0$, then the claim follows from Corollary 2.5 of \cite{CaddeoMontaldoOniciucSpheres}.
Now, assume that $m\geq1$. By the previous proposition, $u$ is asymptotically
harmonic, and therefore the indicial roots of $J_{u}$ are the $\alpha_{\pm},\beta_{\pm}$
of Proposition \ref{prop:ind-roots}. In particular, since $m\ge1$,
the interval $\left(-1,m+1\right)$ does not contain any indicial
roots for $J_{u}$. Let now $p\in\partial M$, and call $q=u\left(p\right)$.
By Proposition \ref{prop:characterization-normal-operator}, the normal
operator $N_{p}\left(J_{u}\right)$ equals $J_{u_{p}}$, the Jacobi
operator of the model simple $b$-map $u_{p}:M_{p}\to N_{q}$. This
operator is
\[
J_{u_{p}}=\Delta+\tr_{g}R_{h_{q}}\left(^{0}du_{p},\cdot\right){^{0}du_{p}},
\]
where $\Delta$ is the rough Laplacian on $u_{p}^{*}{^{0}TN_{q}}$.
Now, since the metric $h_{q}$ on $N_{q}$ is a rescaling of the hyperbolic
metric, the term $\tr_{g}R_{h_{q}}\left(^{0}du_{p},\cdot\right){^{0}du_{p}}$
is positive semi-definite: indeed, if $v\in{^{0}T_{p}M}$ and $w\in{^{0}T_{q}N}$,
the quantity
\[
\left\langle R_{h_{q}}\left(^{0}du_{p}\left(v\right),w\right){^{0}du_{p}}\left(v\right),w\right\rangle 
\]
is either zero if ${^{0}du_{p}}\left(v\right),w$ are colinear, or
a negative rescaling of the sectional curvature of $h_{q}$. This
implies that $J_{u_{p}}$ has no $L_{0}^{2}$ kernel: indeed, if $\omega$
is in the $L_{0}^{2}$ kernel of $J_{u_{p}}$, then integrating the
equation $\left(J_{u_{p}}\omega,\omega\right)_{L_{0}^{2}}=0$ by parts
we obtain that $u_{p}^{*}\nabla''\omega=0$, where $\nabla''$ is
the Levi-Civita $0$-connection on $^{0}T\left(N_{q}\right)$. This
implies that $\left|\omega\right|$ is constant on $M_{p}^{\circ}$,
while the $L_{0}^{2}$ condition implies that $\left|\omega\right|$
decays at infinity; therefore, we must have $\omega=0$. We thus proved
that, for every $p\in\partial M$, the normal operator $N_{p}\left(J_{u}\right)$
has vanishing $L^{2}$ kernel. It follows then from Theorem \ref{thm:(Mazzeo)}
that the operator $J_{u}$ is Fredholm of index zero as a map
\[
J_{u}:x^{\delta-\frac{m}{2}}H_{0}^{2}\left(M;u^{*}{^{0}TN}\right)\to x^{\delta-\frac{m}{2}}L_{0}^{2}\left(M;u^{*}{^{0}TN}\right)
\]
for every $\delta\in\left(-1,m+1\right)$, and that the $x^{\delta-\frac{m}{2}}L_{0}^{2}$
kernels are all equal for $\delta$ in this interval. Now, since $u$
is a simple $b$-map, the tension field $\tau\left(u\right)$ is a
section of $u^{*}{^{0}TN}$. Since $u$ is biharmonic, $\tau\left(u\right)$
is in the $x^{\delta-\frac{m}{2}}L_{0}^{2}$ kernel of $J_{u}$ for
every $\delta<0$ (in fact, since $u$ is asymptotically harmonic,
we have $\tau\left(u\right)=O\left(x\right)$ and therefore $\tau\left(u\right)$
is in the $x^{\delta-\frac{m}{2}}L_{0}^{2}$ kernel of $J_{u}$ for
every $\delta<1$). But then, we can integrate the equation $\left(J_{u}\left(\tau\left(u\right)\right),\tau\left(u\right)\right)_{L_{0}^{2}}=0$
by parts, as we did above in the model case. Again, from the fact
that $h$ has non-positive sectional curvature and $\tau\left(u\right)$
decays at infinity, we deduce that $\tau\left(u\right)=0$ as claimed.
\end{proof}
Recall that harmonic isometric immersions must be minimal. Then, directly
from the previous result, we obtain the following version of the Generalized
Chen's Conjecture:
\begin{cor}
Let $\left(N,h\right)$ be a conformally compact manifold of non-positive
sectional curvature, and let $M$ be a compact manifold with boundary.
Let $u:M\to N$ be an immersion mapping $\partial M$ to $\partial N$,
$M^{\circ}$ to $N^{\circ}$, and transversal to $\partial N$. Then
$u$ is biharmonic if and only if it is minimal.
\end{cor}

\bibliography{allpapers}{}
\bibliographystyle{alpha}
\newpage
\end{document}